\newtheorem{theorem}{Theorem}
\numberwithin{equation}{section}
\numberwithin{lemma}{section}
\numberwithin{theorem}{section}
\numberwithin{corollary}{section}
\begin{document}
\title{On the discrete analogues of Appell function $F_1$}

\author{Ravi Dwivedi$^{1,}$\footnote{E-mail: dwivedir999@gmail.com}   \, and Vivek Sahai$^{2,}$\footnote{E-mail: sahai\_vivek@hotmail.com (Corresponding author)} \\ ${}^{1}$Department of Science, SAGEMMC, Jagdalpur, Bastar, CG, 494001, India; \\ ${}^{2}$Department of Mathematics and Astronomy, Lucknow University, \\ Lucknow 226007, India.}
\maketitle 
\begin{abstract}
In the present paper, two discrete forms of Appell function $F_1$ are introduced and studied. We examine the first discrete form in detail and give the results directly for the second. We study their regions of convergence, differential properties, integral representations, recursion relations, finite and infinite sums. The results and identities obtained in the paper are believed to be new and may find applications in various branches of science. 

\medskip
\noindent \textbf{AMS Subject Classification:} 33C65.

\medskip
\noindent \textbf{Keywords:} Appell functions, discrete hypergeometric functions.
\end{abstract}

\section{Introduction}
 Appell, in 1880, gave a set of four hypergeometric functions of two variables $F_1$, $F_2$, $F_3$ and $F_4$. These functions were named after him as Appell hypergeometric function or Appell series, \cite{kdf, emo}. After Appell, Humbert used the technique of getting degenerations (the functions obtained by allowing the limit in the parent function) of a hypergeometric function and obtained a list of seven degenerations of Appell functions, called Humbert functions $\phi_1$, $\phi_2$, $\phi_3$, $\psi_1$, $\psi_2$, $\varXi_1$ and $\varXi_2$, \cite{ph}. Later, Horn studied two variable functions systematically and produced a complete set of 34 functions including Appell functions and their degenerations, \cite{sk}. 

The physical appearance of these functions in applied mathematics, applied and theoretical physics, statistics, engineering and other fields increased  the interest of mathematicians and motivated them to explore these functions from different aspects. As a result, these functions have distinct generalizations and analogues, \emph{viz.} the $q$-Appell functions \cite{ds2,gr}, Appell functions with matrix arguments \cite{am, am1, nn}, Appell matrix functions \cite{ds1, ds5}, finite field version \cite{bts} and many more.  Recently, discrete analogues of special functions are attracting the mathematician and physicist most. The discrete analogue of Gauss hypergeometric function \cite{bc3}, the generalized hypergeometric function \cite{bc4, bc2}, Bessel functions \cite{bc1}, Legendre polynomial \cite{bc5}, to name a few, have appeared in the literature. 

In this paper, we introduce two discrete analogues of Appell function $F_1$. We elaborate the first one in detail and study its important properties \emph{viz.} special cases, limiting cases, region of convergence, difference-differential equations, integral representations, difference-differential formulae, finite and infinite sums and recursion relations. We also present the results for the second discrete analogue of $F_1$. The sectionwise treatment of the paper is as follows:

In Section~2, we list basic definitions and preliminaries that help in the understanding of the results obtained in the paper. In Section~3, we introduce discrete Appell functions and discuss its special cases, regions of convergence and difference-differential equations obeyed by the first discrete Appell function $\mathcal{F}_1^{(1)}$.  We also obtain integral representations for the first discrete Appell function $\mathcal{F}_1^{(1)}$. 
The discrete Humbert functions obtained from $\mathcal{F}_1^{(1)}$ as degenerations are introduced. In Section~4, differential and difference formulae for discrete Appell function $\mathcal{F}_1^{(1)}$ are determined. In Section~5, we obtain finite and infinite summation results with the help of binomial and transformation formulae. In Section~6, we give the recursion formulae satisfied by $\mathcal{F}_1^{(1)}$. We also present a list of first and second order recursion relations of  $\mathcal{F}_1^{(1)}$. In Section~7, the other  discrete form of Appell function $F_1$ \emph{viz.} $\mathcal{F}_1^{(2)}$ is studied. We  present the results and theorems for $\mathcal{F}_1^{(2)}$ directly without proofs. Finally, in Section~8,  we have added a conclusion part which includes the scope of this work.

\section{Basic definitions and preliminaries} 
Throughout the paper, $\mathbb{N}$ and $\mathbb{C}$ will denote the set of natural numbers and complex numbers respectively. For a complex number $w$, the gamma function is defined in terms of Euler integral as \cite{edr}
\begin{align}
	\Gamma (w) = \int_{0}^{\infty} e^{-u} \, u^{w - 1} \, du, \quad \Re(w) > 0,
\end{align}
and for $v, w \in \mathbb{C}$, the beta function is defined by
\begin{align}
	\beta (v, w) = \int_{0}^{1} u^{v - 1} \, (1 - u)^{w - 1} \, du, \quad \Re(v), \Re (w) > 0.
\end{align}
These two basic special functions are related by the fundamental relation
\begin{align}
\beta (v, w) = \frac{\Gamma (v) \, \Gamma (w)}{\Gamma (v + w)}.
\end{align}	
The next milestone in the theory of special functions was the discovery of Gauss hypergeometric function. For $u, v, w \in \mathbb{C}$ and $w \ne 0, - 1, -2, \dots$, the Gauss function is defined by 
\begin{align}
	{}_2F_1 \left(\begin{array}{c}
	u, v\\
	w
	\end{array}; z\right) = \sum_{k = 0}^{\infty} \frac{(u)_k \, (v)_k}{(w)_k} \, \frac{z^k}{k !}, \quad \vert z \vert < 1, \label{24}
\end{align}	
where $(u)_k$ is the Pochhammer symbol given by
\begin{equation}
(u)_k = \begin{cases}
	1, & \text{ if } k = 0,\\
	u \, (u + 1) \cdots (u + k - 1), & \text{ if } k \ge 1.
\end{cases}
\end{equation}
There are several important identities involving Pochhammer symbol and we list some of them here, which are needed in rest of the paper. 
\begin{align}
	(u)_k & = \frac{\Gamma (u + k)}{\Gamma (u)};\\
	(u)_{k + l} & = (u)_k \, (u + k)_l =  (u)_l \, (u + l)_k;\\
	(u)_k & = (-1)^k \, (1 - u - k)_k;\\
	(- k)_l &= \frac{(-1)^l \, k !}{(k - l)!};\\
	(u)_{k - l} & = \frac{(-1)^l \, (u)_k}{(1 - u - k)_l}, \quad 0 \le l \le k.
\end{align}
The Gauss hypergeometric function \eqref{24} has  the following natural generalization called generalized hypergeometric function, defined by
\begin{align}
	{}_pF_q \left(\begin{array}{c}
		u_1, \dots, u_p\\
		v_1, \dots, v_q
	\end{array}; z\right) = \sum_{k = 0}^{\infty} \frac{(u_1)_k  \cdots (u_p)_k}{(v_1)_k  \cdots (v_q)_k} \, \frac{z^k}{k !}, \quad \vert z \vert < 1. 
\end{align}	
Also, it has two variable generalizations, known as Appell functions, given by
\begin{align}
	F_1 (u, v, v'; w; x, y) & = \sum_{k, l = 0}^{\infty} \frac{(u)_{k + l} \, (v)_k  \, (v')_l}{(w)_{k + l}} \, \frac{x^k \, y^l}{k ! \, l!}, \quad \vert x \vert, \, \vert y \vert < 1;\\
	F_2 (u, v, v'; w, w'; x, y) & = \sum_{k, l = 0}^{\infty} \frac{(u)_{k + l} \, (v)_k  \, (v')_l}{(w)_{k} \, (w')_{l}} \, \frac{x^k \, y^l}{k ! \, l!}, \quad \vert x \vert + \vert y \vert < 1;\\
	F_3 (u, u', v, v'; w; x, y) & = \sum_{k, l = 0}^{\infty} \frac{(u)_{k} \, (u')_l \, (v)_k  \, (v')_l}{(w)_{k + l}} \, \frac{x^k \, y^l}{k ! \, l!}, \quad \vert x \vert, \, \vert y \vert < 1;\\
	F_4 (u, v; w, w'; x, y) & = \sum_{k, l = 0}^{\infty} \frac{(u)_{k + l} \, (v)_{k + l}  }{(w)_{k} \, {(w')}_l} \, \frac{x^k \, y^l}{k ! \, l!}, \quad \sqrt{\vert x \vert} +  \sqrt{\vert y \vert} < 1.
\end{align}
Furthermore, the following generalized form of two variable function is known as the Kamp\'e de F\'eriet function:
\begin{align}
	& F_{{k_2}:{l_2}; {l'_2}} ^{{k_1}:{l_1}; {l'_1}}\left(\begin{array}{ccc}
		A: & B; & C\\
		D: &E; &F 
	\end{array}; x, \ y\right) =\displaystyle  \sum_{k,l = 0}^{\infty} \ \frac{\prod_{i=1}^{k_1} (a_i)_{k+l} \, \prod_{i=1}^{l_1} (b_i)_{k} \, \prod_{i=1}^{l'_1} (c_i)_{l}}{\prod_{i=1}^{k_2} (d_i)_{k+l}   \, \prod_{i=1}^{l_2} (e_i)_{k}  \, \prod_{i=1}^{l'_2} (f_i)_{l}} \ \frac{x^k \, y^l}{k! \, l!},\label{c1eq71}
\end{align}
where $A$ denotes the sequence of complex numbers $a_1$, \dots, $a_{k_1}$, etc. and $d_i, e_i, f_i \ne 0, -1, \dots$. The Kamp\'e de F\'eriet function converges \cite{sk}, for 
	\begin{align}
		(i) \ & k_1 + l_1 < k_2 + l_2 + 1, \, k_1 + l_1' < k_2 + l_2' + 1, \quad \vert x\vert < \infty, \,\vert y\vert < \infty, \text{ or } \nonumber \\
		(ii) \  &  k_1 + l_1 = k_2 + l_2 + 1, \, k_1 + l_1'= k_2 + l_2' + 1, \text{ and } \nonumber\\ 
		&  \begin{cases}
			\vert x\vert ^{\frac{1}{k_1 - k_2}} + \vert y\vert ^{\frac1{k_1 - k_2}}< 1, \text{ if }  k_1 > k_2,\\
			\vert x\vert < 1,  \vert y\vert < 1, \text{ if } k_1\leq k_2. 
		\end{cases}
	\end{align}  
Recently, the discrete analogue of special functions is being studied and is receiving much attention. 
The generalized discrete hypergeometric function is defined by \cite{bc4}
 \begin{align}
 	{}_pF_q \left(\begin{array}{c}
 		u_1, \dots, u_p\\
 		v_1, \dots, v_q
 	\end{array}; t, k, z\right) = \sum_{m = 0}^{\infty} \frac{(u_1)_m  \cdots (u_p)_m}{(v_1)_m  \cdots (v_q)_m} \, \frac{(-1)^{m \, k} (-t)_{m \, k} \, z^m}{m !}, 
 \end{align}	
where
\begin{align}
	(-t)_{m \, k} & = 	k^{m \, k} \, \prod_{i = 0}^{k - 1} \left(\frac{-t + i}{k}\right)_m. \label{e219}
\end{align}
The discrete analogue of Gauss function,  Bessel function, Legendre polynomial, Gauss matrix function and generalized hypergeometric matrix function have been studied  in \cite{bc1}--\cite{bc5}, to name a few.

\section{Discrete Appell function $\mathcal{F}_1^{(1)}$}	
In this section, we introduce two distinct discrete forms of Appell function $F_1$. We elaborate one of the forms in detail and obtain its regions of convergence, special cases, difference equations, integral representations and its limiting cases. 

Let  $a$, $b_1$, $b_2$, $c$, $t$, $t_1$ and $t_2$ be complex numbers such that $\Re (c) \ne 0, -1, \dots$ and let $k, k_1, k_2 \in \mathbb{N}$. We define two discrete versions of Appell hypergeometric  function ${F}_1$, denoted by $\mathcal{F}^{(1)}_1$ and $\mathcal{F}^{(2)}_1$, as follows: 
	  \begin{align}
	 \mathcal{F}^{(1)}_1 & := \mathcal{F}^{(1)}_1(a, b_1, b_2; c; t_1, t_2, k_1, k_2, x, y)\nonumber\\ 
	 & = \sum_{m,n\geq0} \frac{(a)_{m+n} \, (b_1)_m \, (b_2)_n \, (-1)^{m k_1} \, (-1)^{n k_2} \, (-t_1)_{mk_1} \, (-t_2)_{nk_2}}{ (c)_{m+n} \, m! \, n!} \ x^m \, y^n; \label{3.1}
	  \end{align} 
	  \begin{align}
	  \mathcal{F}^{(2)}_1 & :=	\mathcal{F}^{(2)}_1(a, b_1, b_2; c; t, k, x, y)\nonumber\\
	  & = \sum_{m,n\geq0} \frac{(a)_{m+n} \, (b_1)_m \, (b_2)_n \, (-1)^{(m + n) k} \, (-t)_{(m + n)k}}{(c)_{m+n} \, m! \, n!} \ x^m \, y^n; \label{3.2}
	  \end{align} 
     Here, and in subsequent sections, we explore the discrete Appell  function $\mathcal{F}^{(1)}_1$ and later in the last section, we list results for the discrete  Appell function $\mathcal{F}^{(2)}_1$. 
     
	  For different values of $k_1$ and $k_2$, the discrete Appell function $\mathcal{F}^{(1)}_1$ reduces into classical functions. In particular, for $k_1 = 0 = k_2$, we have
	  \begin{align}
	  	\mathcal{F}^{(1)}_1(a, b_1, b_2; c; t_1, t_2, 0, 0, x, y) & 
	  	= F_1 (a, b_1, b_2; c; x, y).
	  \end{align}
     For $k_1 = 1, k_2 = 0$, it gives
      \begin{align}
     	 \mathcal{F}^{(1)}_1(a, b_1, b_2; c; t_1, t_2, 1, 0, x, y) 
     & = F_{{1}:{0}; {0}} ^{{1}:{2}; {1}}\left(\begin{array}{ccc}
      	a: & b_1, - t_1; & b_2\\
      	c: & - ; & - 
      \end{array}; -x, \ y\right).
      \end{align}
For $k_1 = 0, k_2 = 1$, the discrete function $ \mathcal{F}^{(1)}_1$ reduces into
  \begin{align}
  	\mathcal{F}^{(1)}_1(a, b_1, b_2; c; t_1, t_2, 0, 1, x, y) 
  	& = F_{{1}:{0}; {0}} ^{{1}:1; {2}}\left(\begin{array}{ccc}
  		a: & b_1; & b_2, -t_2\\
  		c: & - ; & - 
  	\end{array}; x, \ -y\right).
  \end{align}
Similarly, for $k_1 = k_2 = 1$, we have 
  \begin{align}
  	&\mathcal{F}^{(1)}_1(a, b_1, b_2; c; t_1, t_2, 1, 1, x, y)
  	 = F_{{1}:{0}; {0}} ^{{1}:{2}; {2}}\left(\begin{array}{ccc}
  		a: & b_1, - t_1; & b_2, -t_2\\
  		c: & - ; & - 
  	\end{array}; -x, \ -y\right).
  \end{align}
First, we examine the convergence of discrete Appell function $\mathcal{F}^{(1)}_1$. Let $\mathcal{A}_{m, n} x^m \, y^n$ be the general term of the discrete function $\mathcal{F}^{(1)}_1$. Then
  \begin{align}
  &	\left \vert \mathcal{A}_{m, n} x^m \, y^n \right \vert\nonumber\\
   & = \left \vert \frac{(a)_{m+n} \, (b_1)_m \, (b_2)_n \, (-t_1)_{mk_1} \, (-t_2)_{nk_2}}{ (c)_{m+n} \, m! \, n!} \ x^m \, y^n \right \vert\nonumber\\
  	& < \left \vert \frac{\Gamma (c)}{\Gamma (a) \, \Gamma (b_1) \, \Gamma (b_2) \, \Gamma (-t_1) \, \Gamma (-t_2)} \right \vert \nonumber\\
  	& \quad \times \left \vert \frac{\Gamma (a + m + n) \, \Gamma (b_1 + m) \, \Gamma (b_2 + n) \, \Gamma (-t_1 + mk_1) \, \Gamma (-t_2 + nk_2)}{\Gamma (c + m + n) \, \Gamma (m + 1) \, \Gamma (n + 1)} \right \vert \, \vert x \vert^m \, \vert y\vert^n.\label{2.9}
  \end{align}
Using the Stirling formula 
\begin{align}
	\lim_{n \to \infty} \Gamma (\lambda + n) = \sqrt (2 \pi) \, n^{\lambda + n - \frac{1}{2}} \, e^{-n},
\end{align}
for large values of $m$ and $n$, we write \eqref{2.9} as 
\begin{align}
	&	\left \vert \mathcal{A}_{m, n} x^m \, y^n \right \vert\nonumber\\
	& < \left \vert \frac{\Gamma (c)}{\Gamma (a) \, \Gamma (b_1) \, \Gamma (b_2) \, \Gamma (-t_1) \, \Gamma (-t_2)} \right \vert \nonumber\\
	& \quad \times \left \vert 2 \pi \, (m + n)^{a - c} \, m^{b_1 - 1} \, n^{b_2 - 1} \, (mk_1)^{mk_1 - t_1 - \frac{1}{2}} \, (nk_2)^{nk_2 - t_2 - \frac{1}{2}} e^{- (mk_1 + nk_2)} \right \vert \, \vert x \vert^m \, \vert y\vert^n.
\end{align}
Let $N > \left \vert \frac{2 \pi \, \Gamma (c)}{\Gamma (a) \, \Gamma (b_1) \, \Gamma (b_2) \, \Gamma (-t_1) \, \Gamma (-t_2)} \right \vert$. Then
\begin{align}
	&	\left \vert \mathcal{A}_{m, n} x^m \, y^n \right \vert < \frac{N \, (mk_1)^{mk_1 - t_1 - \frac{1}{2}} \, (nk_2)^{nk_2 - t_2 - \frac{1}{2}}}{ (m + n)^{c - a} \, m^{1 - b_1} \, n^{1 - b_2} \, e^{(mk_1 + nk_2)}} \, \vert x \vert^m \, \vert y\vert^n.
\end{align}
For $k_1, k_2 \in \mathbb{N}, t_1, t_2 \in \mathbb{C}$ and $\vert x\vert < 1, \vert y \vert < 1$, $\left \vert \mathcal{A}_{m, n} x^m \, y^n \right \vert \to 0$ as $m, n \to \infty$. Hence the discrete function $\mathcal{F}^{(1)}_1$ converges absolutely. 

Next, we discuss the difference equation obeyed by the discrete function $\mathcal{F}^{(1)}_1$. Let $\Delta_t$ be the forwarding operator such that $\Delta_t f(t) = f(t + 1) - f(t)$ and let $\rho_t$ be the shifting operator defined as $\rho_t \, f (t) = f(t - 1)$. Then, the discrete analogue of $\theta = t \, \frac{d}{dt}$ is denoted  by $\Theta_t := t \, \rho_t \, \Delta_t$. Using the difference operator $\Theta_t$, following result can be easily obtained
\begin{align}
\Theta_t \, ((-1)^{nk} \, (-t)_{nk}) & =
  n\, k \, (-1)^{nk} \, (-t)_{nk}. 
\end{align} 
Now
\begin{align}
	& \Theta_{t_1} \left(\frac{1}{k_1} \Theta_{t_1} + \frac{1}{k_2} \Theta_{t_2} + c - 1\right) \, \mathcal{F}^{(1)}_1\nonumber \\
	& = \sum_{m,n \geq 0} \frac{(a)_{m+n} \, (b_1)_m \, (b_2)_n \, (-1)^{m k_1} \, (-1)^{n k_2} \, (-t_1)_{mk_1} \, (-t_2)_{nk_2}}{ (c)_{m+n} \, m! \, n!} \ x^m \, y^n \, mk_1 \, (c + m + n - 1)\nonumber\\
	& = k_1 \, \sum_{m, n \ge 0} \frac{(a)_{m+n + 1} \, (b_1)_{m + 1} \, (b_2)_n \, (-1)^{(m + 1) k_1} \, (-1)^{n  k_2}\, (-t_1)_{(m + 1)k_1} \, (-t_2)_{nk_2}}{ (c)_{m + n} \, m! \, n!} \ x^{m + 1} \, y^n\nonumber\\
	& = k_1 \, \sum_{m, n \ge 0} (a + m + n) \, (b_1 + m) \, (-1)^{k_1} \, (-t_1)_{k_1} \, x \, \rho_{t_1}^{k_1} \nonumber\\
	& \quad \times \frac{(a)_{m+n} \, (b_1)_{m} \, (b_2)_n \, (-1)^{m  k_1} \, (-1)^{n  k_2}\, (-t_1)_{m k_1} \, (-t_2)_{nk_2}}{ (c)_{m + n} \, m! \, n!} \ x^{m} \, y^n\nonumber\\
	& = k_1 \, (-1)^{k_1} \, (-t_1)_{k_1} \, x \, \rho_{t_1}^{k_1} \,  \left(\frac{1}{k_1} \Theta_{t_1} + \frac{1}{k_2} \Theta_{t_2} + a\right) \, \left(\frac{1}{k_1} \Theta_{t_1}  + b_1\right) \, \mathcal{F}^{(1)}_1.
\end{align}
Thus, we arrive at 
\begin{align}
	&\left[\Theta_{t_1} \left(\frac{1}{k_1} \Theta_{t_1} + \frac{1}{k_2} \Theta_{t_2} + c - 1\right)\right]\mathcal{F}^{(1)}_1 \nonumber\\
	& = \left[k_1 \, (-1)^{k_1} \, (-t_1)_{k_1} \, x \, \rho_{t_1}^{k_1} \,  \left(\frac{1}{k_1} \Theta_{t_1} + \frac{1}{k_2} \Theta_{t_2} + a\right) \, \left(\frac{1}{k_1} \Theta_{t_1}  + b_1\right) \right] \mathcal{F}^{(1)}_1.\label{1.15}
\end{align}	    
This is the first difference equation satisfied by $\mathcal{F}^{(1)}_1$.  Similarly, one can get the second difference equation as
\begin{align}
	&\left[\Theta_{t_2} \, \left(\frac{1}{k_1} \Theta_{t_1} + \frac{1}{k_2} \Theta_{t_2} + c - 1\right)\right]\mathcal{F}^{(1)}_1\nonumber\\
	& =  \left[k_2 \, (-1)^{k_2} \, (-t_2)_{k_2} \, y \, \rho_{t_2}^{k_2} \,  \left(\frac{1}{k_1} \Theta_{t_1} + \frac{1}{k_2} \Theta_{t_2} + a\right) \, \left(\frac{1}{k_2} \Theta_{t_2}  + b_2 \right) \right] \mathcal{F}^{(1)}_1.\label{1.16}
\end{align}	    
From Equation \eqref{1.15}, we have
  \begin{align}
  	\frac{\left(\frac{1}{k_1} \Theta_{t_1} + \frac{1}{k_2} \Theta_{t_2} + a\right)}{\left(\frac{1}{k_1} \Theta_{t_1} + \frac{1}{k_2} \Theta_{t_2} + c - 1\right)} \, \mathcal{F}^{(1)}_1 = \frac{\Theta_{t_1}}{\left(\frac{1}{k_1} \Theta_{t_1}  + b_1\right) \, k_1 \, (-1)^{k_1} \, (-t_1)_{k_1} \, x \, \rho_{t_1}^{k_1}} \,  \mathcal{F}^{(1)}_1.\label{117}
  \end{align}  
Similarly, Equation \eqref{1.16} yields
\begin{align}
	\frac{\left(\frac{1}{k_1} \Theta_{t_1} + \frac{1}{k_2} \Theta_{t_2} + a\right)}{\left(\frac{1}{k_1} \Theta_{t_1} + \frac{1}{k_2} \Theta_{t_2} + c - 1\right)} \, \mathcal{F}^{(1)}_1 = \frac{\Theta_{t_2}}{\left(\frac{1}{k_2} \Theta_{t_2}  + b_2\right) \, k_2 \, (-1)^{k_2} \, (-t_2)_{k_2} \, y \, \rho_{t_2}^{k_2}} \,  \mathcal{F}^{(1)}_1. \label{118}
\end{align} 
Equations \eqref{117} and \eqref{118} together give
\begin{align}
\frac{\Theta_{t_1}}{\left(\frac{1}{k_1} \Theta_{t_1}  + b_1\right) \, k_1 \, (-1)^{k_1} \, (-t_1)_{k_1} \, x \, \rho_{t_1}^{k_1}} \,  \mathcal{F}^{(1)}_1 = \frac{\Theta_{t_2}}{\left(\frac{1}{k_2} \Theta_{t_2}  + b_2\right) \, k_2 \, (-1)^{k_2} \, (-t_2)_{k_2} \, y \, \rho_{t_2}^{k_2}} \,  \mathcal{F}^{(1)}_1
\end{align} 
or
\begin{align}
\left[k_2 \, (-1)^{k_2} \, (-t_2)_{k_2} \, y \, \rho_{t_2}^{k_2} \, \Theta_{t_1} \left(\frac{1}{k_2} \Theta_{t_2}  + b_2\right) -
k_1 \, (-1)^{k_1} \, (-t_1)_{k_1} \, x \, \rho_{t_1}^{k_1} \, \Theta_{t_2} \left(\frac{1}{k_1} \Theta_{t_1}  + b_1\right)\right] \,  \mathcal{F}^{(1)}_1=0. 
\end{align}
This is the third difference equation obeyed by the discrete function $\mathcal{F}^{(1)}_1$.  Note that these difference equations generalize the set of differential equations for the classical Appell function $F_1$ as in the paper \cite{mj}. 

\subsection{Integral representations}
In this section, we find the integral representations of the first discrete Appell  function $\mathcal{F}^{(1)}_1$.
\begin{theorem}\label{t7}
Let $a$, $b_1$, $b_2$, $c$, $t_1$ and $t_2$ be complex numbers. Then for $\vert x\vert < 1$, $\vert y\vert < 1$ and $k_1, k_2 \in \mathbb{N}$, the discrete Appell function defined in \eqref{3.1} can be represented in the following integral forms:
	\begin{align}
 & \mathcal{F}^{(1)}_1(a, b_1, b_2; c; t_1, t_2, k_1, k_2, x, y)\nonumber\\
 & =\Gamma \left(\begin{array}{c}
 c\\
 a, c - a
 \end{array}\right) \int_{0}^{1} u^{a - 1} (1 - u)^{c - a - 1} {}_1 \mathcal{F}_0(b_1; - ; t_1,  k_1, u x) \, {}_1 \mathcal{F}_0(b_2; - ; t_2,  k_2, u y) du.\label{1.17}
	\end{align}
	\begin{align}
	& \mathcal{F}^{(1)}_1(a, b_1, b_2; c; t_1, t_2, k_1, k_2, x, y)\nonumber\\
	& =\Gamma \left(\begin{array}{c}
		c\\
		b_1, b_2, c - b_1 - b_2
	\end{array}\right) \iint u^{b_1 - 1} v^{b_2 - 1} (1-u-v)^{c -  b_1 - b_2 - 1} \nonumber\\
& \quad \times F_{{0}:{0}; {0}} ^{{1}:{k_1}; {k_2}}\left(\begin{array}{ccc}
	a: & \frac{- t_1}{k_1}, \dots, \frac{- t_1 + k_1 - 1}{k_1} ; & \frac{- t_2}{k_2}, \dots, \frac{- t_2 + k_2 - 1}{k_2}\\
	-: & - ; & - 
\end{array}; (-k_1)^{k_1} \, u x,  (-k_2)^{k_2} \, v y\right) du\,dv,\nonumber\\
	&\qquad \qquad \qquad \qquad
	u\geq 0, \ v\geq 0, \ 1-u-v\geq 0. \label{1.18}
\end{align}
\end{theorem}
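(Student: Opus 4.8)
The plan is to derive both representations by the classical device of writing a ratio of Pochhammer symbols as a beta-type integral, interchanging summation and integration, and then identifying the resulting inner series. Throughout I use $(u)_k=\Gamma(u+k)/\Gamma(u)$ and $\beta(v,w)=\Gamma(v)\Gamma(w)/\Gamma(v+w)$, and the symbol $\Gamma\left(\begin{array}{c}c\\ a,c-a\end{array}\right)$ denotes the quotient $\Gamma(c)/(\Gamma(a)\Gamma(c-a))$. The integrals below are valid in the parameter ranges $\Re(a)>0$, $\Re(c-a)>0$ (respectively $\Re(b_1)>0$, $\Re(b_2)>0$, $\Re(c-b_1-b_2)>0$), the general case following by analytic continuation.

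For \eqref{1.17} I would isolate the factor $(a)_{m+n}/(c)_{m+n}$ in \eqref{3.1} and replace it by Euler's integral
\[
\frac{(a)_{m+n}}{(c)_{m+n}}=\frac{\Gamma(c)}{\Gamma(a)\,\Gamma(c-a)}\int_0^1 u^{a+m+n-1}(1-u)^{c-a-1}\,du .
\]
Inserting this into \eqref{3.1} and interchanging the sum with the integral (legitimate since the series converges absolutely for $\vert x\vert,\vert y\vert<1$ as established above, and $\vert ux\vert\le\vert x\vert$, $\vert uy\vert\le\vert y\vert$ on $[0,1]$), the power $u^{m+n}$ splits as $u^m u^n$ and attaches to $x^m$ and $y^n$. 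The double series then factors as a product of two single series, which by the definition of the discrete ${}_1\mathcal{F}_0$ are exactly ${}_1\mathcal{F}_0(b_1;-;t_1,k_1,ux)$ and ${}_1\mathcal{F}_0(b_2;-;t_2,k_2,uy)$; this gives \eqref{1.17}.

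For \eqref{1.18} I would instead isolate $(b_1)_m(b_2)_n/(c)_{m+n}$ and represent it by the two-dimensional Dirichlet integral over the simplex $u,v\ge0$, $u+v\le1$,
\[
\frac{(b_1)_m(b_2)_n}{(c)_{m+n}}=\frac{\Gamma(c)}{\Gamma(b_1)\Gamma(b_2)\Gamma(c-b_1-b_2)}\iint u^{b_1+m-1}v^{b_2+n-1}(1-u-v)^{c-b_1-b_2-1}\,du\,dv .
\]
Substituting into \eqref{3.1} and interchanging, I pull the factor $u^{b_1-1}v^{b_2-1}(1-u-v)^{c-b_1-b_2-1}$ out of the sum and am left, under the integral sign, with the double series $\sum_{m,n\ge0}(a)_{m+n}(-1)^{mk_1}(-1)^{nk_2}(-t_1)_{mk_1}(-t_2)_{nk_2}(ux)^m(vy)^n/(m!\,n!)$.

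The crux is to recognize this last series as the stated Kamp\'e de F\'eriet function, and here the identity \eqref{e219} is decisive. Writing $(-t_1)_{mk_1}=k_1^{mk_1}\prod_{i=0}^{k_1-1}\bigl(\tfrac{-t_1+i}{k_1}\bigr)_m$ (and similarly for $t_2,k_2$), the sign and the power of $k_1$ recombine with the monomial through $(-1)^{mk_1}k_1^{mk_1}(ux)^m=\bigl[(-k_1)^{k_1}ux\bigr]^m$, so that the $k_1$ Pochhammer symbols $\bigl(\tfrac{-t_1+i}{k_1}\bigr)_m$ become precisely the $m$-indexed numerator parameters and $(-k_1)^{k_1}ux$ the first argument; the $n$-sum is handled identically. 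Comparing with \eqref{c1eq71} (one numerator parameter of $(k+l)$-type, namely $a$; $k_1$ parameters of $k$-type; $k_2$ of $l$-type; and no denominator parameters) yields the function $F^{1:k_1;k_2}_{0:0;0}$ and completes \eqref{1.18}. I expect the only delicate point to be the sign-and-power bookkeeping in this repackaging step, since the remaining pieces (the beta and Dirichlet integrals and the term-by-term interchange) are routine.
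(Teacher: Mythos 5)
Your proposal is correct and follows essentially the same route as the paper's proof: Euler's beta integral for $(a)_{m+n}/(c)_{m+n}$ to obtain \eqref{1.17}, the Dirichlet simplex integral for $(b_1)_m(b_2)_n/(c)_{m+n}$ together with the identity \eqref{e219} to repackage the series as the Kamp\'e de F\'eriet function $F^{1:k_1;k_2}_{0:0;0}$ for \eqref{1.18}. Your added remarks on the parameter constraints, analytic continuation, and the justification for interchanging sum and integral are points the paper leaves implicit, but the substance of the argument is identical.
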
	
\begin{proof}
Using 
\begin{align}
\frac{(a)_{m+n}}{(c)_{m+n}}
 & = \Gamma \left(\begin{array}{c}
 	c\\
 	a, c - a
 \end{array}\right) \int_{0}^{1} u^{a + m + n - 1} \, (1-u)^{c - a - 1} \, du, \label{58}
\end{align}
one can write
	\begin{align}
	& \mathcal{F}^{(1)}_1(a, b_1, b_2; c; t_1, t_2, k_1, k_2, x, y)\nonumber\\
	& =\Gamma \left(\begin{array}{c}
		c\\
		a, c - a
	\end{array}\right) \sum_{m, n \geq 0} \int_{0}^{1} u^{a + m + n - 1} \, (1-u)^{c - a - 1} \nonumber\\
& \quad \times \frac{(b_1)_m \, (b_2)_n \, (-1)^{m k_1} \, (-1)^{n k_2} \, (-t_1)_{mk_1} \, (-t_2)_{nk_2}}{ m! \, n!} \ x^m \, y^n du\nonumber\\
& = \Gamma \left(\begin{array}{c}
	c\\
	a, c - a
\end{array}\right) \int_{0}^{1} u^{a - 1} \, (1 - u)^{c - a - 1} \sum_{m\geq 0} \frac{(b_1)_m  \, (-1)^{m k_1} \, (-t_1)_{m k_1} \, (u \, x)^m}{ m! } \nonumber\\
& \quad \times  \sum_{n\geq 0} \frac{(b_2)_n \, (-1)^{ n k_2} \,  (-t_2)_{nk_2} (uy)^n}{  n!} \, du \nonumber\\
& =\Gamma \left(\begin{array}{c}
	c\\
	a, c - a
\end{array}\right) \int_{0}^{1} u^{a - 1} (1 - u)^{c - a - 1} {}_1 \mathcal{F}_0(b_1; - ; t_1,  k_1, u x) \, {}_1 \mathcal{F}_0(b_2; - ; t_2,  k_2, u y) du.
\end{align}
To prove \eqref{1.18}, we use the integral formula, \cite{ds1}
\begin{align}
	&\frac{(b_1)_m \, (b_2)_n}{(c)_{m + n}}\nonumber\\
	&  = \Gamma \left(\begin{array}{c}
		c\\
		b_1, b_2, c - b_1 - b_2
	\end{array}\right) \iint u^{b_1 + m - 1} v^{b_2 + n - 1} (1-u-v)^{c -  b_1 - b_2 - 1} \, du \, dv,\nonumber\\
&\qquad \qquad \qquad \qquad
u\geq 0, \ v\geq 0, \ 1-u-v\geq 0,
\end{align}
and as such get
\begin{align}
	& \mathcal{F}^{(1)}_1(a, b_1, b_2; c; t_1, t_2, k_1, k_2, x, y)\nonumber\\
	& =\Gamma \left(\begin{array}{c}
		c\\
		b_1, b_2, c - b_1 - b_2
	\end{array}\right) \sum_{m, n \geq 0} \iint u^{b_1 + m - 1} v^{b_2 + n - 1} (1-u-v)^{c -  b_1 - b_2 - 1} \nonumber\\
& \quad \times \frac{(a)_{m + n} \, (-1)^{m k_1} \, (-1)^{n k_2} (-t_1)_{mk_1} \, (-t_2)_{nk_2} }{m! \, n!} \, x^m \, y^n \, du \, dv. 
\end{align}
The implication of \eqref{e219}
gives
\begin{align}
		& \mathcal{F}^{(1)}_1(a, b_1, b_2; c; t_1, t_2, k_1, k_2, x, y)\nonumber\\
	& =\Gamma \left(\begin{array}{c}
		c\\
		b_1, b_2, c - b_1 - b_2
	\end{array}\right)  \iint u^{b_1  - 1} v^{b_2  - 1} (1-u-v)^{c -  b_1 - b_2 - 1}  \nonumber\\
	& \quad \times \sum_{m, n \geq 0}  \frac{\left(\frac{-t_1}{k_1}\right)_m \, \left(\frac{-t_1 + 1}{k_1}\right)_m \cdots \left(\frac{-t_1 + k_1 - 1}{k_1}\right)_m \, \left(\frac{-t_2}{k_2}\right)_n \, \left(\frac{-t_2 + 1}{k_2}\right)_n \cdots \left(\frac{-t_2 + k_2 - 1}{k_2}\right)_n }{m! \, n! }\nonumber\\
	& \quad \times  (a)_{m + n} \, (-1)^{m k_1} \, (-1)^{n k_2} \, (k_1^{k_1} u x)^m \, (k_2^{k_2} v y)^n \, du \, dv\nonumber\\
& = \Gamma \left(\begin{array}{c}
	c\\
	b_1, b_2, c - b_1 - b_2
\end{array}\right)  \iint u^{b_1  - 1} v^{b_2  - 1} (1-u-v)^{c -  b_1 - b_2 - 1} \nonumber\\	
	& \quad \times F_{{0}:{0}; {0}} ^{{1}:{k_1}; {k_2}}\left(\begin{array}{ccc}
		a: & \frac{- t_1}{k_1}, \dots, \frac{- t_1 + k_1 - 1}{k_1} ; & \frac{- t_2}{k_2}, \dots, \frac{- t_2 + k_2 - 1}{k_2}\\
		-: & - ; & - 
	\end{array}; (-k_1)^{k_1} \, u x,  (-k_2)^{k_2} \, v y\right) du\,dv.
\end{align}
This completes the proof.
\end{proof}
Several other integral representations of discrete Appell function $\mathcal{F}^{(1)}_1$ can be established using the integral form of gamma function. We list some integrals here for the discrete Appell function $\mathcal{F}^{(1)}_1$: 
\begin{align}
& \mathcal{F}^{(1)}_1(a, b_1, b_2; c; t_1, t_2, k_1, k_2, x, y)\nonumber\\
& = \frac{1}{\Gamma (a)} \int_{0}^{\infty} e^{-u} \, u^{a - 1} \nonumber\\
& \quad \times F_{{1}:{0}; {0}} ^{{0}:{k_1 + 1}; {k_2 + 1}}\left(\begin{array}{ccc}
	- : & b_1, \frac{- t_1}{k_1}, \dots, \frac{- t_1 + k_1 - 1}{k_1} ; & b_2, \frac{- t_2}{k_2}, \dots, \frac{- t_2 + k_2 - 1}{k_2}\\
	c: & - ; & - 
\end{array}; (-k_1)^{k_1} \, u x,  (-k_2)^{k_2} \, u y\right) du;\label{3.9}\\
& = \frac{1}{\Gamma (b_1)} \int_{0}^{\infty} e^{-u} \, u^{b_1 - 1} \nonumber\\
& \quad \times F_{{1}:{0}; {0}} ^{{1}:{k_1}; {k_2 + 1}}\left(\begin{array}{ccc}
	a : & \frac{- t_1}{k_1}, \dots, \frac{- t_1 + k_1 - 1}{k_1} ; & b_2, \frac{- t_2}{k_2}, \dots, \frac{- t_2 + k_2 - 1}{k_2}\\
	c: & - ; & - 
\end{array}; (-k_1)^{k_1} \, u x,  (-k_2)^{k_2} \, y\right) du;\\
& = \frac{1}{\Gamma (b_2)} \int_{0}^{\infty} e^{-v} \, v^{b_2 - 1} \nonumber\\
& \quad \times F_{{1}:{0}; {0}} ^{{1}:{k_1 + 1}; {k_2}}\left(\begin{array}{ccc}
	a : & b_1, \frac{- t_1}{k_1}, \dots, \frac{- t_1 + k_1 - 1}{k_1} ; & \frac{- t_2}{k_2}, \dots, \frac{- t_2 + k_2 - 1}{k_2}\\
	c: & - ; & - 
\end{array}; (-k_1)^{k_1} \,  x,  (-k_2)^{k_2} \, v y\right) dv\\
& = \frac{1}{\Gamma (-t_1)} \int_{0}^{\infty} e^{-u} \, u^{-t_1 - 1} \nonumber\\
& \quad \times F_{{1}:{0}; {0}} ^{{1}:{1}; {k_2 + 1}}\left(\begin{array}{ccc}
	a : & b_1 ; & b_2, \frac{- t_2}{k_2}, \dots, \frac{- t_2 + k_2 - 1}{k_2}\\
	c: & - ; & - 
\end{array}; (-u)^{k_1} \, x,  (-k_2)^{k_2} \, y\right) du;\\
& = \frac{1}{\Gamma (-t_2)} \int_{0}^{\infty} e^{-v} \, v^{-t_2 - 1} \nonumber\\
& \quad \times F_{{1}:{0}; {0}} ^{{1}:{k_1 + 1}; {1}}\left(\begin{array}{ccc}
	a : & b_1, \frac{- t_1}{k_1}, \dots, \frac{- t_1 + k_1 - 1}{k_1} ; & b_2\\
	c: & - ; & - 
\end{array}; (-k_1)^{k_1} \,  x,  (- v)^{k_2} \, y\right) dv.
\end{align}
To prove \eqref{3.9}, we use the identity $(a)_{m + n} = \frac{\Gamma (a + m + n)}{\Gamma (a)}$ and the integral of $\Gamma (a + m + n)$ as
\begin{align}
\Gamma (a + m + n) = \int_{0}^{\infty} e^{-u} \, u^{a + m + n - 1} \, du.
\end{align}
Similarly, the other shifted factorials of numerator can be used to get the remaining integrals. 

We now introduce the discrete analogue of Humbert functions $\phi^{(1)}_1$, $\phi^{(1)}_2$ and $\phi^{(1)}_3$ as follows:
\begin{align}
	&\phi^{(1)}_1 \left(a, b_1; c; t_1, t_2, k_1, k_2, x,  y\right)\nonumber\\
	 &= \sum_{m,n \geq 0} \frac{(a)_{m+n} \, (b_1)_m \,  \, (-1)^{m k_1} \, (-1)^{n k_2} \, (-t_1)_{mk_1} \, (-t_2)_{nk_2}}{ (c)_{m+n} \, m! \, n!} \ x^m \, y^n;
	\\[5pt]
	&\phi^{(1)}_2 \left( b_1, b_2; c; t_1, t_2, k_1, k_2, x,  y\right)\nonumber\\
	 & = \sum_{m,n \geq 0} \frac{(b_1)_{m} \, (b_2)_n \,  \, (-1)^{m k_1} \, (-1)^{n k_2} \, (-t_1)_{mk_1} \, (-t_2)_{nk_2}}{ (c)_{m+n} \, m! \, n!} \ x^m \, y^n
	\\[5pt]
	&\phi^{(1)}_3 \left( b_1; c; t_1, t_2, k_1, k_2, x,  y\right) \nonumber\\
	& = 	\sum_{m,n \geq 0} \frac{(b_1)_{m} \,  \, (-1)^{m k_1} \, (-1)^{n k_2} \, (-t_1)_{mk_1} \, (-t_2)_{nk_2}}{ (c)_{m+n} \, m! \, n!} \ x^m \, y^n.
\end{align}
It can be shown that the limiting cases of discrete Appell function $\mathcal{F}^{(1)}_1$ give the discrete Humbert functions $\phi^{(1)}_1$, $\phi^{(1)}_2$ and $\phi^{(1)}_3$ as
\begin{align}
	& \lim_{\varepsilon \to 0}  \mathcal{F}^{(1)}_1 \left(a, b_1, \frac{1}{\varepsilon}; c; t_1, t_2, k_1, k_2, x, \varepsilon \, y\right) \nonumber\\
	& = 	 \sum_{m,n \geq 0} \frac{(a)_{m+n} \, (b_1)_m \,  \, (-1)^{m k_1} \, (-1)^{n k_2} \, (-t_1)_{mk_1} \, (-t_2)_{nk_2}}{ (c)_{m+n} \, m! \, n!} \ x^m \, y^n \, \lim_{\varepsilon \to 0} \varepsilon^n \left(\frac{1}{\varepsilon}\right)_n\nonumber\\
	& = \phi^{(1)}_1 \left(a, b_1; c; t_1, t_2, k_1, k_2, x,  y\right).
\end{align}
Similarly
\begin{align}
	\lim_{\varepsilon \to 0}  \mathcal{F}^{(1)}_1 \left(\frac{1}{\varepsilon}, b_1, b_2 ; c; t_1, t_2, k_1, k_2, \varepsilon \, x, \varepsilon \, y\right) = \phi^{(1)}_2 \left( b_1, b_2; c; t_1, t_2, k_1, k_2, x,  y\right).
\end{align}
\begin{align}
	\lim_{\varepsilon \to 0}  \mathcal{F}^{(1)}_1 \left(\frac{1}{\varepsilon}, b_1,\frac{1}{\varepsilon} ; c; t_1, t_2, k_1, k_2, \varepsilon \, x, \varepsilon^2 \, y\right) = \phi^{(1)}_3 \left( b_1; c; t_1, t_2, k_1, k_2, x,  y\right).
\end{align} 
\section{Differential and difference formulae}
Let $\Delta_{t} = f(t + 1) - f(t)$ be the difference operator  and let $\theta = x \frac{\partial}{\partial x}, \phi = y \frac{\partial}{\partial y}$ be the differential operators. Then, we have the following difference and differential formulae satisfied by discrete Appell function $\mathcal{F}^{(1)}_1$:
\begin{align}
&	(\Delta_{t_1})^r \mathcal{F}^{(1)}_1(a, b_1, b_2; c; t_1, t_2, 1, k_2, x, y) \nonumber\\
& = \frac{(a)_r \, (b_1)_r  \, x^r }{(c)_r}   \mathcal{F}^{(1)}_1(a + r, b_1 + r, b_2; c + r; t_1 , t_2, 1, k_2, x, y);\label{4.1}\\
& (\Delta_{t_2})^r \mathcal{F}^{(1)}_1(a, b_1, b_2; c; t_1, t_2, k_1, 1, x, y) \nonumber\\
& = \frac{(a)_r \, (b_2)_r \, y^r}{(c)_r}   \mathcal{F}^{(1)}_1(a + r, b_1, b_2 + r; c + r; t_1, t_2, k_1, 1, x, y);\label{4.2}\\
& (\theta)^r \mathcal{F}^{(1)}_1(a, b_1, b_2; c; t_1, t_2, k_1, k_2, x, y) \nonumber\\
& = \frac{(-1)^{rk_1} \, (a)_r \, (b_1)_r \, (-t_1)_{rk_1} \, x^r}{(c)_r} \mathcal{F}^{(1)}_1(a + r, b_1 + r, b_2; c + r; t_1 - rk_1, t_2, k_1, k_2, x, y);\\
& (\phi)^r \mathcal{F}^{(1)}_1(a, b_1, b_2; c; t_1, t_2, k_1, k_2, x, y) \nonumber\\
& = \frac{(-1)^{rk_2} \, (a)_r \, (b_2)_r \, (-t_2)_{rk_2} \, y^r}{(c)_r}  \mathcal{F}^{(1)}_1(a + r, b_1, b_2 + r; c + r; t_1, t_2 - rk_2, k_1, k_2, x, y).
\end{align}
 The proofs of these differential formulae are straightforward. We give here the proof of \eqref{4.1} only and to accomplish this we use the induction method. Using the identity $\Delta_{t_1} [(-1)^{m} \, (-t_1)_{m}] = m \, (-1)^{m - 1} \, (-t_1)_{m - 1}$, we get
 \begin{align}
 &(\Delta_{t_1}) \mathcal{F}^{(1)}_1(a, b_1, b_2; c; t_1, t_2, 1, k_2, x, y)\nonumber\\
& = \sum_{m,n \geq 0} \frac{(a)_{m+n} \, (b_1)_m \, (b_2)_n \, (-1)^{m  - 1} \, m \, (-t_1)_{m - 1} \, (-1)^{nk_2} \, (-t_2)_{nk_2}}{ (c)_{m+n} \, m! \, n!} \ x^m \, y^n\nonumber\\
& =  \frac{a \, b_1 \, x}{c} \sum_{m,n \geq 0} \frac{(a + 1)_{m+n} \, (b_1 + 1)_m \, (b_2)_n \, (-1)^{m } \, (-t_1)_{m} \, (-1)^{n k_2} (-t_2)_{nk_2}}{ (c + 1)_{m+n} \, m! \, n!} \ x^m \, y^n\nonumber\\
& =   x  \frac{(a)_1 \,( b_1)_1}{(c)_1} \mathcal{F}^{(1)}_1(a + 1, b_1 + 1, b_2; c + 1; t_1, t_2, 1, k_2, x, y).
 \end{align}
Thus the formula is true for $r = 1$. Assuming the formula to be valid for $r = p$, 
we have
\begin{align} 
	&	(\Delta_{t_1})^{p + 1} \mathcal{F}^{(1)}_1(a, b_1, b_2; c; t_1, t_2, 1, k_2, x, y) \nonumber\\
	& = \Delta_{t_1} [(\Delta_{t_1})^{p} \mathcal{F}^{(1)}_1(a, b_1, b_2; c; t_1, t_2, 1, k_2, x, y)] \nonumber\\
	& = \frac{(a)_p \, (b_1)_p  \, x^p }{(c)_p}  \Delta_{t_1} [ \mathcal{F}^{(1)}_1(a + p, b_1 + p, b_2; c + p; t_1 , t_2, 1, k_2, x, y)]\nonumber\\
	& = \frac{(a)_p \, (b_1)_p  \, x^p }{(c)_p} \, \frac{(a + p) \, (b_1 + p)  \, x }{(c + p)} \nonumber\\
	& \quad \times  \mathcal{F}^{(1)}_1 (a + p + 1, b_1 + p + 1, b_2; c + p + 1; t_1, t_2, 1, k_2, x, y)\nonumber\\
	& = \frac{(a)_{p + 1} \, (b_1)_{p + 1}  \, x^{p + 1} }{(c)_{p + 1}} \mathcal{F}^{(1)}_1 (a + p + 1, b_1 + p + 1, b_2; c + p + 1; t_1, t_2, 1, k_2, x, y).
\end{align}
Therefore, the result is true for every $r \in \mathbb{N}$. It completes the proof. 

Besides, some other differential formulas are as follows
\begin{align}
	& \left(\frac{\partial}{\partial x}\right)^r \left[x^{b_1 + r - 1} \mathcal{F}^{(1)}_1(a, b_1, b_2; c; t_1, t_2, k_1, k_2, x, y)\right]\nonumber\\
	&\qquad = x^{b_1 - 1} \, (b_1)_r \, \mathcal{F}^{(1)}_1(a, b_1 + r, b_2; c; t_1, t_2, k_1, k_2, x, y);\label{4.14}\\
		& \left(\frac{\partial}{\partial y}\right)^r [y^{b_2 + r - 1} \mathcal{F}^{(1)}_1(a, b_1, b_2; c; t_1, t_2, k_1, k_2, x, y)]\nonumber\\
	&\qquad  = y^{b_2 - 1} \, (b_2)_r \, \mathcal{F}^{(1)}_1(a, b_1, b_2 + r; c; t_1, t_2, k_1, k_2, x, y);\\
		& \left(\frac{\partial}{\partial x}\right)^r [x^{a + r - 1} \mathcal{F}^{(1)}_1(a, b_1, b_2; c; t_1, t_2, k_1, k_2, x, xy)]\nonumber\\
	&\qquad  = x^{a - 1} \, (a)_r \, \mathcal{F}^{(1)}_1(a + r, b_1, b_2; c; t_1, t_2, k_1, k_2, x, xy);\\
		& \left(\frac{\partial}{\partial y}\right)^r [y^{a + r - 1} \mathcal{F}^{(1)}_1(a, b_1, b_2; c; t_1, t_2, k_1, k_2, xy, y)]\nonumber\\
	&\qquad  = y^{a - 1} \, (a)_r \, \mathcal{F}^{(1)}_1(a + r, b_1, b_2; c; t_1, t_2, k_1, k_2, xy, y);\\
		& \left(\frac{\partial}{\partial x}\right)^r [x^{c - 1} \mathcal{F}^{(1)}_1(a, b_1, b_2; c; t_1, t_2, k_1, k_2, x, xy)]\nonumber\\
	&\qquad  = (-1)^r \, (1 - c)_r \, x^{c - r - 1}  \mathcal{F}^{(1)}_1(a, b_1, b_2; c - r; t_1, t_2, k_1, k_2, x, xy);\\
		& \left(\frac{\partial}{\partial y}\right)^r [y^{c - 1} \mathcal{F}^{(1)}_1(a, b_1, b_2; c; t_1, t_2, k_1, k_2, xy, y)]\nonumber\\
	&\qquad  = (-1)^r \, y^{c - r - 1} \, (1 - c)_r \, \mathcal{F}^{(1)}_1(a, b_1, b_2; c - r; t_1, t_2, k_1, k_2, xy, y).\label{413}
\end{align}
The proofs of \eqref{4.14}-\eqref{413} can be done using some basic differentiation and manipulations. To prove \eqref{4.14}, we start with 
\begin{align}
	& \frac{\partial}{\partial x} \left[x^{b_1} \mathcal{F}^{(1)}_1(a, b_1, b_2; c; t_1, t_2, k_1, k_2, x, y)\right]\nonumber\\
	& = \sum_{m, n \geq 0} \frac{(a)_{m+n} \, (b_1)_m \, (b_2)_n \, (-1)^{m k_1} \, (-1)^{n k_2} \, (-t_1)_{mk_1} \, (-t_2)_{nk_2}}{ (c)_{m+n} \, m! \, n!} \, (b_1 + m) \, x^{b_1 + m - 1} \, y^n\nonumber\\
	& = x^{b_1 - 1} \, b_1 \mathcal{F}^{(1)}_1(a, b_1 + 1, b_2; c; t_1, t_2, k_1, k_2, x, y). 
\end{align}
This shows that the result is true for $r = 1$. Considering the result to be true for $r = p$, 
we have
\begin{align}
	& \left(\frac{\partial}{\partial x}\right)^{p + 1} \left[x^{b_1 + p} \mathcal{F}^{(1)}_1(a, b_1, b_2; c; t_1, t_2, k_1, k_2, x, y)\right]\nonumber\\
	& = \left(\frac{\partial}{\partial x}\right)^{p} [(b_1 + p) x^{b_1 + p - 1} \nonumber\\
	& \quad \times \sum_{m, n \geq 0} \frac{(a)_{m+n} \, (b_1)_m \, (b_2)_n \, (-1)^{m k_1} \, (-1)^{n k_2} \, (-t_1)_{mk_1} \, (-t_2)_{nk_2}}{ (c)_{m+n} \, m! \, n!} \, x^{m} \, y^n\nonumber\\
	& \quad + x^{b_1 + p} \sum_{m, n \geq 0} \frac{(a)_{m+n} \, (b_1)_m \, (b_2)_n \, (-1)^{m k_1} \, (-1)^{n k_2} \, (-t_1)_{mk_1} \, (-t_2)_{nk_2}}{ (c)_{m+n} \, m! \, n!} \, m \, x^{ m - 1} \, y^n]\nonumber\\
	& = \sum_{m, n \geq 0} (b_1 + p + m)  \frac{(a)_{m + n} \, (b_1)_m \, (b_2)_n \, (-1)^{m k_1} \, (-1)^{n k_2} \, (-t_1)_{mk_1} \, (-t_2)_{nk_2}}{ (c)_{m+n} \, m! \, n!}  \, y^n\nonumber\\
& \quad \times \left(\frac{\partial}{\partial x}\right)^{p} x^{b_1 + p + m - 1} \nonumber\\
 & = x^{b_1 - 1} \, (b_1)_{p + 1} \mathcal{F}^{(1)}_1(a, b_1 + p + 1, b_2; c; t_1, t_2, k_1, k_2, x, y).
\end{align}
Hence formula is valid for $r = p+ 1$. Therefore, it is true for each $r \in \mathbb{N}$. It completes the proof. Similarly, other differential formulae can be proved. 
\section{Finite and infinite summation formulas}
In this section, we establish some finite and infinite summation formulas in terms of discrete Appell function $\mathcal{F}^{(1)}_1$. We list them in the following theorem.
\begin{theorem}
	The following summation formulas hold:
	\begin{align}
& \mathcal{F}^{(1)}_1(a, b_1 + r, b_2; c; t_1, t_2, k_1, k_2, x, y)\nonumber\\
& = \sum_{s = 0}^{r} {r \choose s} \frac{(a)_s \, (-1)^{sk_1} \, (-t_1)_{sk_1}}{(c)_s} \, x^s \, \mathcal{F}^{(1)}_1(a + s, b_1 + s, b_2; c + s; t_1 - sk_1, t_2, k_1, k_2, x, y);\label{5.1}\\
& \mathcal{F}^{(1)}_1(a, b_1, b_2 + r; c; t_1, t_2, k_1, k_2, x, y)\nonumber\\
& = \sum_{s = 0}^{r} {r \choose s} \frac{(a)_s \, (-1)^{sk_2} \, (-t_2)_{sk_2}}{(c)_s} \, y^s \, \mathcal{F}^{(1)}_1(a + s, b_1, b_2 + s; c + s; t_1, t_2 - sk_2, k_1, k_2, x, y);\label{5.2}\\
	&\sum_{r = 0}^{\infty} \frac{(a)_r}{r !} \, z^r \, \mathcal{F}^{(1)}_1 (a + r, b_1, b_2; c ; t_1, t_2, k_1, k_2, x, y)\nonumber\\
& = (1 - z)^{-a} \, \mathcal{F}^{(1)}_1 \left(a, b_1, b_2; c; t_1, t_2, k_1, k_2, \frac{x}{1 - z}, \frac{y}{1 - z}\right);\label{56}
\\[5 pt]
&\sum_{r = 0}^{\infty} \frac{(b_1)_r}{r !} \, z^r \, \mathcal{F}^{(1)}_1 (a, b_1 + r, b_2; c ; t_1, t_2, k_1, k_2, x, y)\nonumber\\
& = (1 - z)^{-b_1} \, \mathcal{F}^{(1)}_1 \left(a, b_1, b_2; c; t_1, t_2, k_1, k_2, \frac{x}{1 - z}, y\right);
\\[5 pt]
&\sum_{r = 0}^{\infty} \frac{(b_2)_r}{r !} \, z^r \, \mathcal{F}^{(1)}_1 (a, b_1, b_2 + r; c ; t_1, t_2, k_1, k_2, x, y)\nonumber\\
& = (1 - z)^{-b_2} \, \mathcal{F}^{(1)}_1 \left(a, b_1, b_2; c; t_1, t_2, k_1, k_2, x, \frac{y}{1 - z}\right).
	\end{align}
\end{theorem}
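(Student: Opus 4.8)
The plan is to handle the two finite summation formulas \eqref{5.1}--\eqref{5.2} and the three infinite ones (\eqref{56} together with the two that follow it) by a single common strategy: substitute the series definition \eqref{3.1} for every $\mathcal{F}^{(1)}_1$ appearing on the right-hand side, collapse the resulting products of Pochhammer symbols using the concatenation identities $(u)_{k+l}=(u)_k(u+k)_l$ recorded in Section~2, reindex the summation, and recognize a classical one-variable summation theorem. The absolute convergence established earlier for $\vert x\vert,\vert y\vert<1$ (and $\vert z\vert<1$ for the infinite identities) legitimizes all the rearrangements.

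For \eqref{5.1} I would expand the inner function $\mathcal{F}^{(1)}_1(a+s,b_1+s,b_2;c+s;t_1-sk_1,\dots)$ into a double series in $m,n$, producing a triple sum over $s,m,n$. The prefactor $x^s(a)_s(-1)^{sk_1}(-t_1)_{sk_1}/(c)_s$ then merges with the inner term via $(a)_s(a+s)_{m+n}=(a)_{s+m+n}$, $(c)_s(c+s)_{m+n}=(c)_{s+m+n}$, and, most importantly, the discrete factors recombine through $(-t_1)_{sk_1}(-t_1+sk_1)_{mk_1}=(-t_1)_{(s+m)k_1}$ and $(-1)^{sk_1}(-1)^{mk_1}=(-1)^{(s+m)k_1}$. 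Setting $M=s+m$, every surviving ingredient except $\binom{r}{s}(b_1+s)_{M-s}/(M-s)!$ is independent of $s$, and this remaining factor rewrites as
\[
\binom{r}{s}\frac{(b_1+s)_{M-s}}{(M-s)!}=\frac{(b_1)_M}{M!}\,\frac{(-r)_s\,(-M)_s}{(b_1)_s\,s!},
\]
so the $s$-sum is a terminating ${}_2F_1(-r,-M;b_1;1)$. By the Chu--Vandermonde identity this equals $(b_1+M)_r/(b_1)_r$, and since $(b_1)_M(b_1+M)_r/(b_1)_r=(b_1+r)_M$, the inner sum produces exactly the coefficient $(b_1+r)_M/M!$ in the series of the left-hand side. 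Formula \eqref{5.2} follows verbatim after swapping the roles of $(m,b_1,t_1,k_1,x)$ and $(n,b_2,t_2,k_2,y)$.

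For the infinite formulas the engine is the generalized binomial theorem $\sum_{r\ge0}(\alpha)_r z^r/r!=(1-z)^{-\alpha}$. In \eqref{56}, after expanding, the factor $(a)_r/r!$ meets $(a+r)_{m+n}$, and via $(a)_r(a+r)_{m+n}=(a)_{m+n}(a+m+n)_r$ the $r$-sum becomes $(a)_{m+n}(1-z)^{-(a+m+n)}$; pulling out $(1-z)^{-a}$ and absorbing the surplus $(1-z)^{-m}$, $(1-z)^{-n}$ into $x^m$, $y^n$ yields $(1-z)^{-a}\mathcal{F}^{(1)}_1(\dots;x/(1-z),y/(1-z))$. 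The $(b_1)_r$ and $(b_2)_r$ variants are identical except that $(b_1)_r(b_1+r)_m=(b_1)_{r+m}$ ties the $r$-sum to $m$ alone (respectively $n$ alone), so only $x$ (respectively $y$) is rescaled by $1/(1-z)$ while the other variable is untouched.

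The main obstacle is bookkeeping rather than any deep idea. In the finite identities one must track the two separate $(-1)^{(\cdot)k_1}$ and $(-t_1)_{(\cdot)k_1}$ factors carefully through the reindexing $M=s+m$, so that the discrete ingredients fuse into a single $(-t_1)_{Mk_1}$, and one must confirm that the $s$-sum is genuinely the terminating Gauss series to which Chu--Vandermonde applies. For the infinite identities the only subtlety is justifying the interchange of the $r$-summation with the $(m,n)$-summation, which is secured by absolute convergence on $\vert z\vert<1$, $\vert x/(1-z)\vert<1$, $\vert y/(1-z)\vert<1$; once the order is exchanged, the one-variable binomial series does all the remaining work.
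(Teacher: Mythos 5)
Your proposal is correct, but for the two finite formulas \eqref{5.1}--\eqref{5.2} it takes a genuinely different route from the paper. The paper deduces \eqref{5.1} from the differential machinery of Section~4: it applies the Leibniz product rule to $\left(\frac{\partial}{\partial x}\right)^r\left[x^{b_1+r-1}\mathcal{F}^{(1)}_1\right]$, evaluates each factor with the derivative formula $\left(\frac{\partial}{\partial x}\right)^s\mathcal{F}^{(1)}_1=\frac{(a)_s\,(b_1)_s\,(-1)^{sk_1}\,(-t_1)_{sk_1}}{(c)_s}\,\mathcal{F}^{(1)}_1(a+s,b_1+s,b_2;c+s;t_1-sk_1,t_2,k_1,k_2,x,y)$ together with $\left(\frac{\partial}{\partial x}\right)^{r-s}x^{b_1+r-1}=(b_1+s)_{r-s}\,x^{b_1+s-1}$, and then compares with the closed form \eqref{4.14} of the same $r$-th derivative; after cancelling $x^{b_1-1}(b_1)_r$, formula \eqref{5.1} drops out using only Pochhammer identities, with no classical summation theorem invoked. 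You instead expand the right-hand side into a triple series, fuse the discrete factors via $(-1)^{sk_1}(-1)^{mk_1}(-t_1)_{sk_1}(-t_1+sk_1)_{mk_1}=(-1)^{(s+m)k_1}(-t_1)_{(s+m)k_1}$, reindex $M=s+m$, and evaluate the inner $s$-sum as a terminating ${}_2F_1(-r,-M;b_1;1)$ by Chu--Vandermonde; I checked your reduction $\binom{r}{s}\frac{(b_1+s)_{M-s}}{(M-s)!}=\frac{(b_1)_M}{M!}\frac{(-r)_s(-M)_s}{(b_1)_s\,s!}$ and the final identification $(b_1)_M\,(b_1+M)_r/(b_1)_r=(b_1+r)_M$, and both are right (the truncation at $s\le\min(r,M)$ is handled automatically by the vanishing of $(-M)_s$). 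Your route is self-contained at the level of series and does not depend on Section~4, at the cost of importing Chu--Vandermonde as external input; the paper's route is shorter because it recycles results already proved in the same paper. For the three infinite formulas your argument coincides with the paper's --- binomial theorem plus $(a)_{r+m+n}=(a)_r(a+r)_{m+n}=(a)_{m+n}(a+m+n)_r$ --- the only difference being that you work from the left-hand side while the paper starts from the right-hand side, which is immaterial.
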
  
\begin{proof}
The Leibnitz rule, for differentiation of product of functions, allows us to write
\begin{align}
	& \left(\frac{\partial}{\partial x}\right)^r \left[x^{b_1 + r - 1} \, \mathcal{F}^{(1)}_1(a, b_1, b_2; c; t_1, t_2, k_1, k_2, x, y)\right]\nonumber\\
	& = \sum_{s = 0}^{r} {r \choose s} \left[\left(\frac{\partial}{\partial x}\right)^{r - s} \, x^{b_1 + r - 1}\right] \, \left[\left(\frac{\partial}{\partial x}\right)^s  \, \mathcal{F}^{(1)}_1(a, b_1, b_2; c; t_1, t_2, k_1, k_2, x, y)\right]\nonumber\\
	& = \sum_{s = 0}^{r} {r \choose s} (-1)^{r - s} \, (1 - b_1 + s)_{r - s} \, x^{b_1 + s - 1} \, \frac{(a)_s \, (b_1)_s \, (-1)^{sk_1} \, (-t_1)_{sk_1}}{(c)_s}\nonumber\\
	& \quad \times \mathcal{F}^{(1)}_1(a + s, b_1 + s, b_2; c + s; t_1 - sk_1, t_2, k_1, k_2, x, y) \nonumber\\
	& = \sum_{s = 0}^{r} {r \choose s} \, (-1)^{r - s} \frac{(-1)^s \, (1 - b_1 - r)_r}{(b_1)_s} \, x^{b_1 + s - 1} \, \frac{(a)_s \, (b_1)_s \, (-1)^{sk_1} \, (-t_1)_{sk_1}}{(c)_s}\nonumber\\
	& \quad \times \mathcal{F}^{(1)}_1(a + s, b_1 + s, b_2; c + s; t_1 - sk_1, t_2, k_1, k_2, x, y) \nonumber\\
	& = (b_1)_r \, \sum_{s = 0}^{r} {r \choose s} \,  x^{b_1 + s - 1} \, \frac{(a)_s  \, (-1)^{sk_1} \, (-t_1)_{sk_1}}{(c)_s}\nonumber\\
	& \quad \times \mathcal{F}^{(1)}_1(a + s, b_1 + s, b_2; c + s; t_1 - sk_1, t_2, k_1, k_2, x, y).
\end{align}
Taking into account Equation \eqref{4.14}, we have
\begin{align}
&	x^{b_1 - 1} \, (b_1)_r \, \mathcal{F}^{(1)}_1(a, b_1 + r, b_2; c; t_1, t_2, k_1, k_2, x, y)\nonumber\\
	& = (b_1)_r \, \sum_{s = 0}^{r} {r \choose s} \,  x^{b_1 + s - 1} \, \frac{(a)_s  \, (-1)^{sk_1} \, (-t_1)_{sk_1}}{(c)_s}\nonumber\\
	& \quad \times \mathcal{F}^{(1)}_1(a + s, b_1 + s, b_2; c + s; t_1 - sk_1, t_2, k_1, k_2, x, y).
\end{align}
This implies
	\begin{align}
	& \mathcal{F}^{(1)}_1(a, b_1 + r, b_2; c; t_1, t_2, k_1, k_2, x, y)\nonumber\\
	& = \sum_{s = 0}^{r} {r \choose s} \frac{(a)_s \, (-1)^{sk_1} \, (-t_1)_{sk_1}}{(c)_s} \, x^s \, \mathcal{F}^{(1)}_1(a + s, b_1 + s, b_2; c + s; t_1 - sk_1, t_2, k_1, k_2, x, y).
\end{align}
This completes the proof of \eqref{5.1}. A similar procedure may be applied for proving \eqref{5.2}.
To prove \eqref{56}, we start with its right hand side and get
\begin{align}
&(1 - z)^{-a} \, \mathcal{F}^{(1)}_1 \left(a, b_1, b_2; c; t_1, t_2, k_1, k_2, \frac{x}{1 - z}, \frac{y}{1 - z}\right) \nonumber\\
& = \sum_{m,n \geq 0} (1 - z)^{- (a + m + n)} \, \frac{(a)_{m+n} \, (b_1)_m \, (b_2)_n \, (-1)^{m k_1} \, (-1)^{n k_2} \, (-t_1)_{mk_1} \, (-t_2)_{nk_2}}{ (c)_{m+n} \, m! \, n!} \ x^m \, y^n.
\end{align}
Using the binomial theorem $(1 - z)^{- (a + m + n)} = \sum_{r = 0}^{\infty} \frac{(a + m + n)_r}{r !} \, z^r$ and the identity $(a)_{m + n + r} = (a)_{m + n} \, (a + m + n)_r = (a)_r \, (a + r)_{m + n}$, we have
\begin{align}
	&(1 - z)^{-a} \, \mathcal{F}^{(1)}_1 \left(a, b_1, b_2; c; t_1, t_2, k_1, k_2, \frac{x}{1 - z}, \frac{y}{1 - z}\right) \nonumber\\
	& = \sum_{m,n \geq 0}  \, \sum_{r = 0}^{\infty} \frac{(a)_r}{r !} \, z^r  \frac{(a + r)_{m+n} \, (b_1)_m \, (b_2)_n \, (-1)^{m k_1} \, (-1)^{n k_2} \, (-t_1)_{mk_1} \, (-t_2)_{nk_2}}{ (c)_{m+n} \, m! \, n!} \ x^m \, y^n\nonumber\\
	& = \sum_{r = 0}^{\infty} \frac{(a)_r}{r !} \, z^r \, \mathcal{F}^{(1)}_1 (a + r, b_1, b_2; c ; t_1, t_2, k_1, k_2, x, y).
\end{align}
Hence, we arrived at the equation \eqref{56}. Remaining results can be proved in a similar manner.
\end{proof}
\section{Recursion Formulae}
\begin{theorem} 
	The discrete Appell function $\mathcal{F}^{(1)}_1$ satisfies the following recursion formulas:
\begin{align}
& \mathcal{F}^{(1)}_1 (a + s, b_1, b_2; c ; t_1, t_2, k_1, k_2, x, y) \nonumber\\
& = \mathcal{F}^{(1)}_1 (a, b_1, b_2; c ; t_1, t_2, k_1, k_2, x, y)\nonumber\\
& \quad + \frac{(-1)^{k_1} \, (-t_1)_{k_1} \, b_1 \, x}{c} \sum_{r = 1}^{s} \mathcal{F}^{(1)}_1 (a + r, b_1 + 1, b_2; c + 1; t_1 - k_1, t_2, k_1, k_2, x, y) \nonumber\\
& \quad + \frac{(-1)^{k_2} \, (-t_2)_{k_2} \, b_2 \, y}{c}  \sum_{r = 1}^{s} \mathcal{F}^{(1)}_1 (a + r, b_1, b_2 + 1; c + 1; t_1, t_2 - k_2, k_1, k_2, x, y);\label{e6.1}\\
 & \mathcal{F}^{(1)}_1 (a - s, b_1, b_2; c ; t_1, t_2, k_1, k_2, x, y) \nonumber\\
& = \mathcal{F}^{(1)}_1 (a, b_1, b_2; c ; t_1, t_2, k_1, k_2, x, y) \nonumber\\
& \quad - \frac{(-1)^{k_1} \, (-t_1)_{k_1} \, b_1 \, x}{c}  \sum_{r = 0}^{s - 1} \mathcal{F}^{(1)}_1 (a - r, b_1 + 1, b_2; c + 1; t_1 - k_1, t_2, k_1, k_2, x, y)\nonumber\\
& \quad - \frac{(-1)^{k_2} \, (-t_2)_{k_2} \, b_2 \, y}{c} \sum_{r = 0}^{s - 1} \mathcal{F}^{(1)}_1 (a - r, b_1, b_2 + 1; c + 1; t_1, t_2 - k_2, k_1, k_2, x, y);\\
& \mathcal{F}^{(1)}_1 (a, b_1 + s, b_2; c ; t_1, t_2, k_1, k_2, x, y) \nonumber\\
& = \mathcal{F}^{(1)}_1 (a, b_1, b_2; c ; t_1, t_2, k_1, k_2, x, y) \nonumber\\
& \quad + \frac{(-1)^{k_1} \, (-t_1)_{k_1} \, a \, x}{c} \sum_{r = 1}^{s} \mathcal{F}^{(1)}_1 (a + 1, b_1 + r, b_2; c + 1; t_1 - k_1, t_2, k_1, k_2, x, y);\label{e63}\\
& \mathcal{F}^{(1)}_1 (a, b_1 - s, b_2; c ; t_1, t_2, k_1, k_2, x, y) \nonumber\\
& = \mathcal{F}^{(1)}_1 (a, b_1, b_2; c ; t_1, t_2, k_1, k_2, x, y) \nonumber\\
& \quad - \frac{(-1)^{k_1} \, (-t_1)_{k_1} \, a \, x}{c}  \sum_{r = 0}^{s - 1} \mathcal{F}^{(1)}_1 (a + 1, b_1 - r, b_2; c + 1; t_1 - k_1, t_2, k_1, k_2, x, y);\label{e64}\\
& \mathcal{F}^{(1)}_1 (a, b_1, b_2; c - s; t_1, t_2, k_1, k_2, x, y) \nonumber\\
& = \mathcal{F}^{(1)}_1 (a, b_1, b_2; c ; t_1, t_2, k_1, k_2, x, y) \nonumber\\
& \quad + (-1)^{k_1} \, (-t_1)_{k_1} \, a \, b_1 \, x  \sum_{r = 1}^{s} \frac{\mathcal{F}^{(1)}_1 (a + 1, b_1 + 1, b_2; c + 2 - r; t_1 - k_1, t_2, k_1, k_2, x, y)}{(c - r) \, (c - r + 1)} \nonumber\\
& \quad + (-1)^{k_2} \, (-t_2)_{k_2} \, a \, b_2 \, y  \sum_{r = 1}^{s} \frac{\mathcal{F}^{(1)}_1 (a + 1, b_1, b_2 + 1; c + 2 - r; t_1, t_2 - k_2, k_1, k_2, x, y)}{(c - r) \, (c - r + 1)}.
\end{align}
\end{theorem}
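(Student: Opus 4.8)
The plan is to reduce every formula to a single \emph{elementary contiguous relation} (the case $s=1$) and then telescope. I would treat equation \eqref{e6.1} as the template. Starting from the series \eqref{3.1}, I compute the difference $\mathcal{F}^{(1)}_1(a+1,\dots)-\mathcal{F}^{(1)}_1(a,\dots)$ termwise. Since $(a+1)_{m+n}=\frac{a+m+n}{a}(a)_{m+n}$, the bracketed coefficient collapses to $(a)_{m+n}\,\frac{m+n}{a}$, so the difference equals $\frac1a\sum_{m,n}\frac{(a)_{m+n}(b_1)_m(b_2)_n\,(m+n)\,D_{m,n}}{(c)_{m+n}\,m!\,n!}x^my^n$, where $D_{m,n}$ abbreviates the discrete factors $(-1)^{mk_1}(-1)^{nk_2}(-t_1)_{mk_1}(-t_2)_{nk_2}$. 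I then split the numerator factor $m+n$ into its two summands $m$ and $n$. In the $m$-piece the term $m=0$ drops, and the shift $m\mapsto m+1$ together with $(a)_{m+1+n}=a(a+1)_{m+n}$, $(b_1)_{m+1}=b_1(b_1+1)_m$, and $(c)_{m+1+n}=c(c+1)_{m+n}$ rebuilds a copy of $\mathcal{F}^{(1)}_1$ with $a,b_1,c$ each raised by one.

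The decisive bookkeeping is on the discrete factor. By the Pochhammer splitting identity, $(-t_1)_{(m+1)k_1}=(-t_1)_{k_1}\,(-t_1+k_1)_{mk_1}=(-t_1)_{k_1}\,(-(t_1-k_1))_{mk_1}$ and $(-1)^{(m+1)k_1}=(-1)^{k_1}(-1)^{mk_1}$, so the shift simultaneously peels off the prefactor $(-1)^{k_1}(-t_1)_{k_1}$ and lowers the discrete parameter to $t_1-k_1$. This produces exactly $\frac{(-1)^{k_1}(-t_1)_{k_1}b_1x}{c}\,\mathcal{F}^{(1)}_1(a+1,b_1+1,b_2;c+1;t_1-k_1,t_2,\dots)$, while the $n$-piece yields the symmetric $t_2$-term. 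I then obtain \eqref{e6.1} by writing $\mathcal{F}^{(1)}_1(a+s,\dots)-\mathcal{F}^{(1)}_1(a,\dots)=\sum_{r=0}^{s-1}\big[\mathcal{F}^{(1)}_1(a+r+1,\dots)-\mathcal{F}^{(1)}_1(a+r,\dots)\big]$, applying the $s=1$ relation with $a\mapsto a+r$ to each summand, and reindexing $r+1\mapsto r$ to land on the range $1\le r\le s$.

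The remaining identities follow the same two-step recipe with only cosmetic changes. For the decreasing case $a-s$ I telescope over $\mathcal{F}^{(1)}_1(a-r-1,\dots)-\mathcal{F}^{(1)}_1(a-r,\dots)$, which is the negative of the $s=1$ relation evaluated at $a-r-1$, explaining the overall minus sign and the shifted range $0\le r\le s-1$. The $b_1$-recursions \eqref{e63}--\eqref{e64} are simpler: here only the $m$-piece survives, because the increment $(b_1+1)_m-(b_1)_m=(b_1)_m\,m/b_1$ involves no $n$, so a single $t_1$-term appears and $a$ is raised by one uniformly; the increasing and decreasing versions again differ only in the direction of telescoping.

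The genuinely different case, and the one I expect to be the main obstacle, is the $c$-recursion. Here the increment is a difference of reciprocals: using $(c-1)_{m+n}=\frac{c-1}{c+m+n-1}(c)_{m+n}$ I show $\frac{1}{(c-1)_{m+n}}-\frac{1}{(c)_{m+n}}=\frac{1}{(c)_{m+n}}\cdot\frac{m+n}{c-1}$, which carries an extra factor $1/(c-1)$ beyond the $a$-case. Repeating the $m$- and $n$-splits then gives the $s=1$ relation $\mathcal{F}^{(1)}_1(\dots;c-1;\dots)-\mathcal{F}^{(1)}_1(\dots;c;\dots)=\frac{ab_1(-1)^{k_1}(-t_1)_{k_1}x}{(c-1)c}\mathcal{F}^{(1)}_1(a+1,b_1+1,b_2;c+1;t_1-k_1,\dots)+(\text{$t_2$-term})$, where the denominator is now the \emph{product} $(c-1)c$. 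Telescoping $\mathcal{F}^{(1)}_1(\dots;c-s;\dots)-\mathcal{F}^{(1)}_1(\dots;c;\dots)=\sum_{r=1}^{s}\big[\mathcal{F}^{(1)}_1(\dots;c-r;\dots)-\mathcal{F}^{(1)}_1(\dots;c-r+1;\dots)\big]$ with $c\mapsto c-r+1$ turns that denominator into $(c-r)(c-r+1)$ and the raised argument $c+1$ into $c+2-r$, reproducing the final formula verbatim. Keeping the two-factor denominator and the $c+2-r$ argument synchronised through the reindexing is the one place where a sign or off-by-one slip is easy, so I would verify it explicitly against the $s=1$ and $s=2$ cases.
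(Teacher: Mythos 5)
Your proposal is correct and follows essentially the same route as the paper: the paper also establishes the $s=1$ contiguous relation by termwise series manipulation (using $(a)_{m+n+1}=a(a+1)_{m+n}$, $(b_1)_{m+1}=b_1(b_1+1)_m$, $(c)_{m+n+1}=c(c+1)_{m+n}$ and the peeling $(-1)^{(m+1)k_1}(-t_1)_{(m+1)k_1}=(-1)^{k_1}(-t_1)_{k_1}(-1)^{mk_1}(-(t_1-k_1))_{mk_1}$) and then extends to general $s$ by induction, which is exactly your telescoping argument in unrolled form. Your verification of the $c$-recursion, including the $(c-r)(c-r+1)$ denominator and the $c+2-r$ argument, matches what the paper leaves to the reader with ``the other formulae can be verified in the same manner.''
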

\begin{proof}
We first prove the formula \eqref{e6.1} for $s=1$:
\begin{align}
 &\mathcal{F}^{(1)}_1 (a, b_1, b_2; c ; t_1, t_2, k_1, k_2, x, y)\nonumber\\
 & \quad  + \frac{(-1)^{k_1} \, (-t_1)_{k_1} \, b_1 \, x}{c} \mathcal{F}^{(1)}_1 (a + 1, b_1 + 1, b_2; c + 1; t_1 - k_1, t_2, k_1, k_2, x, y) \nonumber\\
 & \quad  + \frac{(-1)^{k_2} \, (-t_2)_{k_2} \, b_2 \, y}{c}   \mathcal{F}^{(1)}_1 (a + 1, b_1, b_2 + 1; c + 1; t_1, t_2 - k_2, k_1, k_2, x, y)\nonumber\\
& = \sum_{m,n \geq 0}  \, \frac{(a)_{m+n} \, (b_1)_m \, (b_2)_n \, (-1)^{m k_1} \, (-1)^{n k_2} \, (-t_1)_{mk_1} \, (-t_2)_{nk_2}}{ (c)_{m+n} \, m! \, n!} \ x^m \, y^n \nonumber\\
& \quad  + \frac{(-1)^{k_1} \, (-t_1)_{k_1} \, b_1 \, x}{c} \nonumber\\
& \quad \times \sum_{m,n \geq 0}  \, \frac{(a + 1)_{m+n} \, (b_1 + 1)_m \, (b_2)_n \, (-1)^{m k_1} \, (-t_1 + k_1)_{mk_1} \, (-1)^{nk_2} \, (-t_2)_{nk_2}}{ (c + 1)_{m+n} \, m! \, n!} \ x^m \, y^n \nonumber\\
& \quad  + \frac{(-1)^{k_2} \, (-t_2)_{k_2} \, b_2 \, y}{c} \nonumber\\
& \quad \times  \sum_{m,n \geq 0}  \, \frac{(a + 1)_{m+n} \, (b_1)_m \, (b_2 + 1)_n \, (-1)^{m k_1} \, (-t_1)_{mk_1} \, (-1)^{n k_2} \, (-t_2 + k_2)_{nk_2}}{ (c + 1)_{m+n} \, m! \, n!} \ x^m \, y^n\nonumber\\
& = \sum_{m,n \geq 0}  \, \frac{(a)_{m+n} \, (b_1)_m \, (b_2)_n \, (-1)^{m k_1} \, (-1)^{n k_2} \, (-t_1)_{mk_1} \, (-t_2)_{nk_2}}{ (c)_{m+n} \, m! \, n!} \ x^m \, y^n \nonumber\\
& \quad + \sum_{m,n \geq 0} \frac{m}{a} \, \frac{(a)_{m+n} \, (b_1)_m \, (b_2)_n \, (-1)^{m k_1} \, (-1)^{n k_2} \, (-t_1)_{mk_1} \, (-t_2)_{nk_2}}{ (c)_{m+n} \, m! \, n!} \ x^m \, y^n \nonumber\\
& \quad + \sum_{m,n \geq 0}  \frac{n}{a} \, \frac{(a)_{m+n} \, (b_1)_m \, (b_2)_n \, (-1)^{m k_1} \, (-1)^{n k_2} \, (-t_1)_{mk_1} \, (-t_2)_{nk_2}}{ (c)_{m+n} \, m! \, n!} \ x^m \, y^n\nonumber\\
& =  \sum_{m,n \geq 0}  \frac{a + m + n}{a} \, \frac{(a)_{m+n} \, (b_1)_m \, (b_2)_n \, (-1)^{m k_1} \, (-1)^{n k_2} \, (-t_1)_{mk_1} \, (-t_2)_{nk_2}}{ (c)_{m+n} \, m! \, n!} \ x^m \, y^n\nonumber\\
& = \mathcal{F}^{(1)}_1 (a + 1, b_1, b_2; c; t_1, t_2, k_1, k_2, x, y). \label{6.6}
\end{align}
So the formula is true for $s = 1$. Assuming the validity of \eqref{e6.1} for $s = p$, we get 
\begin{align}
	& \mathcal{F}^{(1)}_1 (a, b_1, b_2; c ; t_1, t_2, k_1, k_2, x, y)\nonumber\\
& \quad + \frac{(-1)^{k_1} \, (-t_1)_{k_1} \, b_1 \, x}{c} \sum_{r = 1}^{p + 1} \mathcal{F}^{(1)}_1 (a + r, b_1 + 1, b_2; c + 1; t_1 - k_1, t_2, k_1, k_2, x, y) \nonumber\\
& \quad + \frac{(-1)^{k_2} \, (-t_2)_{k_2} \, b_2 \, y}{c}  \sum_{r = 1}^{p + 1} \mathcal{F}^{(1)}_1 (a + r, b_1, b_2 + 1; c + 1; t_1, t_2 - k_2, k_1, k_2, x, y)\nonumber\\
& = \mathcal{F}^{(1)}_1 (a + p, b_1, b_2; c ; t_1, t_2, k_1, k_2, x, y)\nonumber\\
& \quad + \frac{(-1)^{k_1} \, (-t_1)_{k_1} \, b_1 \, x}{c}  \mathcal{F}^{(1)}_1 (a + p + 1, b_1 + 1, b_2; c + 1; t_1 - k_1, t_2, k_1, k_2, x, y) \nonumber\\
& \quad + \frac{(-1)^{k_2} \, (-t_2)_{k_2} \, b_2 \, y}{c}  \mathcal{F}^{(1)}_1 (a + p + 1, b_1, b_2 + 1; c + 1; t_1, t_2 - k_2, k_1, k_2, x, y) \nonumber\\
	& = \mathcal{F}^{(1)}_1 (a + p + 1, b_1, b_2; c ; t_1, t_2, k_1, k_2, x, y).
\end{align}
Hence, \eqref{e6.1} is true for $s = p + 1$ and therefore by induction is valid for all $s \in \mathbb{N}$. The other formulae can be verified in the same manner. 

Note that the recursion formulas for $\mathcal{F}^{(1)}_1 (a, b_1, b_2 \pm s; c ; t_1, t_2, k_1, k_2, x, y)$ can be obtained by interchanging $b_1$ and $b_2$ in \eqref{e63} and \eqref{e64}, respectively. 
\end{proof}
Mullen \cite{mj} obtained some differential recursion formulae for Appell functions. Following the same approach, we produce here a complete list of difference and differential recursion formulae satisfied by discrete Appell function $F_1^{(1)}$. We start our findings with the differential recursion formulae first. To obtain them, we list simple differential relations as
\begin{align}
&	a \, \mathcal{F}^{(1)}_1 (a + 1)  = (a + \theta + \phi) \, \mathcal{F}^{(1)}_1;\\
& (a + \theta + \phi - 1) \, \mathcal{F}^{(1)}_1 (a - 1)  =	(a - 1) \, \mathcal{F}^{(1)}_1;\\
&	b_1 \, \mathcal{F}^{(1)}_1 (b_1 + 1)  = (b_1 + \theta) \, \mathcal{F}^{(1)}_1;\\
&(b_1 + \theta - 1) \, \mathcal{F}^{(1)}_1 (b_1 - 1)  =	(b_1 - 1) \, \mathcal{F}^{(1)}_1;\\
&	b_2 \, \mathcal{F}^{(1)}_1 (b_2 + 1)  = (b_2 + \phi) \, \mathcal{F}^{(1)}_1;\\
& (b_2  + \phi - 1) \, \mathcal{F}^{(1)}_1 (b_2 - 1)  =	(b_2 - 1) \, \mathcal{F}^{(1)}_1;\\
&	(c - 1) \, \mathcal{F}^{(1)}_1 (c - 1)  = (c + \theta + \phi - 1) \, \mathcal{F}^{(1)}_1;\\
& (c + \theta + \phi) \, \mathcal{F}^{(1)}_1 (c + 1)  =	c \, \mathcal{F}^{(1)}_1,
\end{align}
where
\begin{align}
\mathcal{F}^{(1)}_1 & = \mathcal{F}^{(1)}_1 (a, b_1, b_2; c; t_1, t_2, k_1, k_2, x, y);\nonumber\\
\mathcal{F}^{(1)}_1 (a + 1) & = \mathcal{F}^{(1)}_1 (a + 1, b_1, b_2; c; t_1, t_2, k_1, k_2, x, y);\nonumber\\
\mathcal{F}^{(1)}_1 (a - 1) & = \mathcal{F}^{(1)}_1 (a - 1, b_1, b_2; c; t_1, t_2, k_1, k_2, x, y)\nonumber
\end{align}
and so on. On combining any two of the above relations, we get a first order or second order differential recursion relation. For example, if  $\mathcal{F}^{(1)}_1 (a + 1)$ is combined with $\mathcal{F}^{(1)}_1 (a - 1)$ or $\mathcal{F}^{(1)}_1 (b_1 - 1)$ or $\mathcal{F}^{(1)}_1 (b_2 - 1)$ or $\mathcal{F}^{(1)}_1 (c + 1)$, we get a second order differential recursion relation as
\begin{align}
& a \, (a - 1) \, \mathcal{F}^{(1)}_1 (a + 1) - (a + \theta + \phi) \, (a + \theta + \phi - 1) \mathcal{F}^{(1)}_1 (a - 1) = 0;\\
& a \, (b_1 - 1) \, \mathcal{F}^{(1)}_1 (a + 1) - (a + \theta + \phi) \, (b_1 + \theta  - 1) \mathcal{F}^{(1)}_1 (b_1 - 1) = 0;\\
& a \, (b_2 - 1) \, \mathcal{F}^{(1)}_1 (a + 1) - (a + \theta + \phi) \, (b_2 + \phi - 1) \mathcal{F}^{(1)}_1 (b_2 - 1) = 0;\\
& a \, c \, \mathcal{F}^{(1)}_1 (a + 1) - (a + \theta + \phi) \, (c + \theta + \phi) \mathcal{F}^{(1)}_1 (c + 1) = 0.
\end{align}
In the same way, if $\mathcal{F}^{(1)}_1 (a + 1)$ is combined with $\mathcal{F}^{(1)}_1 (b_1 + 1)$ or $\mathcal{F}^{(1)}_1 (b_2 + 1)$ or $\mathcal{F}^{(1)}_1 (c - 1)$, we will get a first order differential recursion relations as given below
\begin{align}
	& a \, (b_1 + \theta) \, \mathcal{F}^{(1)}_1 (a + 1) - b_1 \, (a + \theta + \phi) \, \mathcal{F}^{(1)}_1 (b_1 + 1) = 0;\\
		& a \, (b_2 + \phi) \, \mathcal{F}^{(1)}_1 (a + 1) - b_2 \, (a + \theta + \phi) \, \mathcal{F}^{(1)}_1 (b_2 + 1) = 0;\\
	& a \, (c + \theta + \phi - 1) \, \mathcal{F}^{(1)}_1 (a + 1) - (c - 1) \, (a + \theta + \phi) \, \mathcal{F}^{(1)}_1 (c - 1) = 0.
\end{align}
We apply this idea of obtaining differential recursion relations. $\mathcal{F}^{(1)}_1 (a - 1)$ will give six distinct recursion relations; $\mathcal{F}^{(1)}_1 (b_1 + 1)$ gives five such relations; $\mathcal{F}^{(1)}_1 (b_1 - 1)$ gives four; $\mathcal{F}^{(1)}_1 (b_2 + 1)$ three; $\mathcal{F}^{(1)}_1 (b_2 - 1)$ two and $\mathcal{F}^{(1)}_1 (c - 1)$ only one relation. In this way, we get a complete list of 28 relations. Among these 28, we have already presented seven relations of $\mathcal{F}^{(1)}_1 (a + 1)$ and others. We now list the rest of 21 relations.
 \begin{align}
 	&(a + \theta + \phi - 1) \, (b_1 + \theta) \, \mathcal{F}^{(1)}_1 (a - 1) - b_1 \, (a - 1)  \, \mathcal{F}^{(1)}_1 (b_1 + 1) = 0;\\
 &	(a + \theta + \phi - 1) \, (b_2 + \phi) \, \mathcal{F}^{(1)}_1 (a - 1) - b_2 \, (a - 1)  \, \mathcal{F}^{(1)}_1 (b_2 + 1) = 0;\\
 &	(a + \theta + \phi - 1) \, (c + \theta + \phi - 1) \, \mathcal{F}^{(1)}_1 (a - 1) - (c - 1) \, (a - 1)  \, \mathcal{F}^{(1)}_1 (c - 1) = 0;\\
 & (b_1 - 1) \,	(a + \theta + \phi - 1) \,  \mathcal{F}^{(1)}_1 (a - 1) -  (a - 1)  \, (b_1 + \theta - 1) \mathcal{F}^{(1)}_1 (b_1 - 1) = 0;\\
 & (b_2 - 1) \,	(a + \theta + \phi - 1) \,  \mathcal{F}^{(1)}_1 (a - 1) -  (a - 1)  \, (b_2 + \phi - 1) \mathcal{F}^{(1)}_1 (b_2 - 1) = 0;\\
 & c \,	(a + \theta + \phi - 1) \,  \mathcal{F}^{(1)}_1 (a - 1) -  (a - 1)  \, (c + \theta + \phi) \mathcal{F}^{(1)}_1 (c + 1) = 0;\\
 & b_1 \,	(b_1 - 1) \,  \mathcal{F}^{(1)}_1 (b_1 + 1) -  (b_1 + \theta)  \, (b_1 + \theta - 1) \mathcal{F}^{(1)}_1 (b_1 - 1) = 0;\\
 & b_1 \,	(b_2 + \phi) \,  \mathcal{F}^{(1)}_1 (b_1 + 1) -  b_2  \, (b_1 + \theta ) \mathcal{F}^{(1)}_1 (b_2 + 1) = 0;\\
  & b_1 \,	(b_2 - 1) \,  \mathcal{F}^{(1)}_1 (b_1 + 1) -  (b_1 + \theta)  \, (b_2 + \phi - 1) \mathcal{F}^{(1)}_1 (b_2 - 1) = 0;\\
  & b_1 \,	(c + \theta + \phi - 1) \,  \mathcal{F}^{(1)}_1 (b_1 + 1) -  (c - 1)  \, (b_1 + \theta) \mathcal{F}^{(1)}_1 (c - 1) = 0;\\
  & b_1 \,	c \,  \mathcal{F}^{(1)}_1 (b_1 + 1) -  (c + \theta +\phi)  \, (b_1 + \theta) \mathcal{F}^{(1)}_1 (c + 1) = 0;\\
   & b_2 \,	(b_1 - 1) \,  \mathcal{F}^{(1)}_1 (b_2 + 1) -  (b_2 + \phi)  \, (b_1 + \theta - 1) \mathcal{F}^{(1)}_1 (b_1 - 1) = 0;\\
  & b_2 \,	(b_2 - 1) \,  \mathcal{F}^{(1)}_1 (b_2 + 1) -  (b_2 + \phi)  \, (b_2 + \phi - 1) \mathcal{F}^{(1)}_1 (b_2 - 1) = 0;\\
  & b_2 \,	(c + \theta + \phi - 1) \,  \mathcal{F}^{(1)}_1 (b_2 + 1) -  (c - 1)  \, (b_2 + \phi) \mathcal{F}^{(1)}_1 (c - 1) = 0;\\
  & b_2 \,	c \,  \mathcal{F}^{(1)}_1 (b_2 + 1) -  (c + \theta +\phi)  \, (b_2 + \phi) \mathcal{F}^{(1)}_1 (c + 1) = 0;\\
  & (b_2 - 1) \,	(b_1 + \theta - 1) \,  \mathcal{F}^{(1)}_1 (b_1 - 1) -  (b_1 - 1)  \, (b_2 + \phi - 1) \mathcal{F}^{(1)}_1 (b_2 - 1) = 0;\\
  & (b_1 + \theta - 1) \,	(c + \theta + \phi - 1) \,  \mathcal{F}^{(1)}_1 (b_1 - 1) -  (c - 1)  \, (b_1 - 1) \mathcal{F}^{(1)}_1 (c - 1) = 0;\\
  & 	c \, (b_1 + \theta - 1) \, \mathcal{F}^{(1)}_1 (b_1 - 1) - (b_1 - 1)  (c + \theta +\phi)  \,  \mathcal{F}^{(1)}_1 (c + 1) = 0;\\
  & (b_2 + \phi - 1) \,	(c + \theta + \phi - 1) \,  \mathcal{F}^{(1)}_1 (b_2 - 1) -  (c - 1)  \, (b_2 - 1) \mathcal{F}^{(1)}_1 (c - 1) = 0;\\
  & 	c \, (b_2 + \phi - 1) \, \mathcal{F}^{(1)}_1 (b_2 - 1) - (b_2 - 1)  (c + \theta +\phi)  \,  \mathcal{F}^{(1)}_1 (c + 1) = 0;\\
   & 	c \, (c - 1) \, \mathcal{F}^{(1)}_1 (c - 1) - (c + \theta + \phi - 1)  (c + \theta +\phi)  \,  \mathcal{F}^{(1)}_1 (c + 1) = 0.
 \end{align}
Similarly using  the difference relations: 
\begin{align}
	&	a \, \mathcal{F}^{(1)}_1 (a + 1)  = \left(a + \frac{1}{k_1} \Theta_{t_1} + \frac{1}{k_2}\Theta_{t_2}\right) \, \mathcal{F}^{(1)}_1;\\
	& \left(a + \frac{1}{k_1} \Theta_{t_1} + \frac{1}{k_2}\Theta_{t_2} - 1\right) \, \mathcal{F}^{(1)}_1 (a - 1)  =	(a - 1) \, \mathcal{F}^{(1)}_1;\\
	&	b_1 \, \mathcal{F}^{(1)}_1 (b_1 + 1)  = \left(b_1 + \frac{1}{k_1} \Theta_{t_1}\right) \, \mathcal{F}^{(1)}_1;\\
	&\left(b_1 + \frac{1}{k_1} \Theta_{t_1} - 1\right) \, \mathcal{F}^{(1)}_1 (b_1 - 1)  =	(b_1 - 1) \, \mathcal{F}^{(1)}_1;\\
	&	b_2 \, \mathcal{F}^{(1)}_1 (b_2 + 1)  = \left(b_2 + \frac{1}{k_2} \, \Theta_{t_2}\right) \, \mathcal{F}^{(1)}_1;\\
	& \left(b_2 + \frac{1}{k_2} \, \Theta_{t_2} - 1\right) \, \mathcal{F}^{(1)}_1 (b_2 - 1)  =	(b_2 - 1) \, \mathcal{F}^{(1)}_1;\\
	&	(c - 1) \, \mathcal{F}^{(1)}_1 (c - 1)  = \left(c + \frac{1}{k_1} \Theta_{t_1} + \frac{1}{k_2}\Theta_{t_2} - 1\right) \, \mathcal{F}^{(1)}_1;\\
	& \left(c + \frac{1}{k_1} \Theta_{t_1} + \frac{1}{k_2}\Theta_{t_2}\right) \, \mathcal{F}^{(1)}_1 (c + 1)  =	c \, \mathcal{F}^{(1)}_1,
\end{align}
we will get following 28 difference recursion relations obeyed by discrete Appell function $\mathcal{F}^{(1)}_1$. For brevity, the proofs are omitted.
\begin{align}
	& a \, (a - 1) \, \mathcal{F}^{(1)}_1 (a + 1)\nonumber\\
	& \quad - \left(a + \frac{1}{k_1} \Theta_{t_1} + \frac{1}{k_2}\Theta_{t_2}\right) \, \left(a + \frac{1}{k_1} \Theta_{t_1} + \frac{1}{k_2}\Theta_{t_2} - 1\right) \mathcal{F}^{(1)}_1 (a - 1) = 0;\\
	& a \, (b_1 - 1) \, \mathcal{F}^{(1)}_1 (a + 1) \nonumber\\
	& \quad - \left(a + \frac{1}{k_1} \Theta_{t_1} + \frac{1}{k_2}\Theta_{t_2} \right) \, \left(b_1 + \frac{1}{k_1} \Theta_{t_1}  - 1\right) \mathcal{F}^{(1)}_1 (b_1 - 1) = 0;\\
	& a \, (b_2 - 1) \, \mathcal{F}^{(1)}_1 (a + 1)\nonumber\\
	& \quad - \left(a + \frac{1}{k_1} \Theta_{t_1} + \frac{1}{k_2}\Theta_{t_2}\right) \, \left(b_2 + \frac{1}{k_2} \Theta_{t_2} - 1\right) \mathcal{F}^{(1)}_1 (b_2 - 1) = 0;\\
	& a \, c \, \mathcal{F}^{(1)}_1 (a + 1)\nonumber\\
&	\quad  - \left(a + \frac{1}{k_1} \Theta_{t_1} + \frac{1}{k_2}\Theta_{t_2}\right) \, \left(c + \frac{1}{k_1} \Theta_{t_1} + \frac{1}{k_2}\Theta_{t_2} \right) \mathcal{F}^{(1)}_1 (c + 1) = 0;\\
	& a \, \left(b_1 + \frac{1}{k_1} \Theta_{t_1}\right) \, \mathcal{F}^{(1)}_1 (a + 1) - b_1 \, \left(a + \frac{1}{k_1} \Theta_{t_1} + \frac{1}{k_2}\Theta_{t_2}\right) \, \mathcal{F}^{(1)}_1 (b_1 + 1) = 0;\\
	& a \, \left(b_2 + \frac{1}{k_2} \Theta_{t_2}\right) \, \mathcal{F}^{(1)}_1 (a + 1) - b_2 \, \left(a + \frac{1}{k_1} \Theta_{t_1} + \frac{1}{k_2}\Theta_{t_2}\right) \, \mathcal{F}^{(1)}_1 (b_2 + 1) = 0;\\
	& a \, \left(c + \frac{1}{k_1} \Theta_{t_1} + \frac{1}{k_2}\Theta_{t_2} - 1\right) \, \mathcal{F}^{(1)}_1 (a + 1)\nonumber\\
	& \quad  - (c - 1) \, \left(a + \frac{1}{k_1} \Theta_{t_1} + \frac{1}{k_2}\Theta_{t_2}\right) \, \mathcal{F}^{(1)}_1 (c - 1) = 0;\\
	&\left(a + \frac{1}{k_1} \Theta_{t_1} + \frac{1}{k_2}\Theta_{t_2} - 1\right) \, \left(b_1 + \frac{1}{k_1} \Theta_{t_1} \right) \, \mathcal{F}^{(1)}_1 (a - 1) - b_1 \, (a - 1)  \, \mathcal{F}^{(1)}_1 (b_1 + 1) = 0;\\
	&	\left(a + \frac{1}{k_1} \Theta_{t_1} + \frac{1}{k_2}\Theta_{t_2} - 1\right) \, \left(b_2 + \frac{1}{k_2}  \Theta_{t_2}\right) \, \mathcal{F}^{(1)}_1 (a - 1) - b_2 \, (a - 1)  \, \mathcal{F}^{(1)}_1 (b_2 + 1) = 0;\\
	&	\left(a + \frac{1}{k_1} \Theta_{t_1} + \frac{1}{k_2}\Theta_{t_2} - 1\right) \, \left(c + \frac{1}{k_1} \Theta_{t_1} + \frac{1}{k_2}\Theta_{t_2} - 1\right) \, \mathcal{F}^{(1)}_1 (a - 1)\nonumber\\
	& \quad  - (c - 1) \, (a - 1)  \, \mathcal{F}^{(1)}_1 (c - 1) = 0;\\
	& (b_1 - 1) \,	\left(a + \frac{1}{k_1} \Theta_{t_1} + \frac{1}{k_2}\Theta_{t_2} - 1\right) \,  \mathcal{F}^{(1)}_1 (a - 1) \nonumber\\
	& \quad -  (a - 1)  \, \left(b_1 + \frac{1}{k_1} \Theta_{t_1}  - 1\right) \mathcal{F}^{(1)}_1 (b_1 - 1) = 0;\\
	& (b_2 - 1) \,	\left(a + \frac{1}{k_1} \Theta_{t_1} + \frac{1}{k_2}\Theta_{t_2} - 1\right) \,  \mathcal{F}^{(1)}_1 (a - 1)\nonumber\\
	& \quad  -  (a - 1)  \, \left(b_2 + \frac{1}{k_2} \Theta_{t_2} - 1\right) \mathcal{F}^{(1)}_1 (b_2 - 1) = 0;\\
	& c \,	\left(a + \frac{1}{k_1} \Theta_{t_1} + \frac{1}{k_2}\Theta_{t_2} - 1\right) \,  \mathcal{F}^{(1)}_1 (a - 1)\nonumber\\
	& \quad -  (a - 1)  \, \left(c + \frac{1}{k_1} \Theta_{t_1} + \frac{1}{k_2}\Theta_{t_2}\right) \mathcal{F}^{(1)}_1 (c + 1) = 0;\\
	& b_1 \,	(b_1 - 1) \,  \mathcal{F}^{(1)}_1 (b_1 + 1) -  \left(b_1 + \frac{1}{k_1} \Theta_{t_1} \right)  \, \left(b_1 + \frac{1}{k_1} \Theta_{t_1}  - 1\right) \mathcal{F}^{(1)}_1 (b_1 - 1) = 0;\\
	& b_1 \,	\left(b_2 + \frac{1}{k_2} \Theta_{t_2}\right) \,  \mathcal{F}^{(1)}_1 (b_1 + 1) -  b_2  \, \left(b_1 + \frac{1}{k_1} \Theta_{t_1}\right) \mathcal{F}^{(1)}_1 (b_2 + 1) = 0;\\
	& b_1 \,	(b_2 - 1) \,  \mathcal{F}^{(1)}_1 (b_1 + 1) -  \left(b_1 + \frac{1}{k_1} \Theta_{t_1}\right)  \, \left(b_2 + \frac{1}{k_2} \Theta_{t_2} - 1\right) \mathcal{F}^{(1)}_1 (b_2 - 1) = 0;\\
	& b_1 \,	\left(c + \frac{1}{k_1} \Theta_{t_1} + \frac{1}{k_2}\Theta_{t_2} -  1\right) \,  \mathcal{F}^{(1)}_1 (b_1 + 1) -  (c - 1)  \, \left(b_1 + \frac{1}{k_1} \Theta_{t_1} \right) \mathcal{F}^{(1)}_1 (c - 1) = 0;\\
	& b_1 \,	c \,  \mathcal{F}^{(1)}_1 (b_1 + 1) -  \left(c + \frac{1}{k_1} \Theta_{t_1} + \frac{1}{k_2}\Theta_{t_2}\right)  \, \left(b_1 + \frac{1}{k_1} \Theta_{t_1}\right) \mathcal{F}^{(1)}_1 (c + 1) = 0;\\
	& b_2 \,	(b_1 - 1) \,  \mathcal{F}^{(1)}_1 (b_2 + 1) -  \left(b_2 + \frac{1}{k_2} \Theta_{t_2}\right)  \, \left(b_1 + \frac{1}{k_1} \Theta_{t_1}  - 1\right) \mathcal{F}^{(1)}_1 (b_1 - 1) = 0;\\
	& b_2 \,	(b_2 - 1) \,  \mathcal{F}^{(1)}_1 (b_2 + 1) -  \left(b_2 + \frac{1}{k_2} \Theta_{t_2}\right)  \, \left(b_2 + \frac{1}{k_2} \Theta_{t_2}  - 1\right) \mathcal{F}^{(1)}_1 (b_2 - 1) = 0;\\
	& b_2 \,	\left(c + \frac{1}{k_1} \Theta_{t_1} + \frac{1}{k_2}\Theta_{t_2} - 1\right) \,  \mathcal{F}^{(1)}_1 (b_2 + 1) -  (c - 1)  \, \left(b_2 + \frac{1}{k_2}  \Theta_{t_2}\right) \mathcal{F}^{(1)}_1 (c - 1) = 0;\\
	& b_2 \,	c \,  \mathcal{F}^{(1)}_1 (b_2 + 1) -  \left(c +\frac{1}{k_1} \Theta_{t_1} + \frac{1}{k_2}\Theta_{t_2}\right)  \, \left(b_2 + \frac{1}{k_2} \Theta_{t_2}\right) \mathcal{F}^{(1)}_1 (c + 1) = 0;\\
	& (b_2 - 1) \,	\left(b_1 + \frac{1}{k_1} \Theta_{t_1}  - 1\right) \,  \mathcal{F}^{(1)}_1 (b_1 - 1) -  (b_1 - 1)  \, \left(b_2 + \frac{1}{k_2}  \Theta_{t_2} - 1\right) \mathcal{F}^{(1)}_1 (b_2 - 1) = 0;\\
	& \left(b_1 + \frac{1}{k_1} \Theta_{t_1} - 1\right) \,	\left(c + \frac{1}{k_1} \Theta_{t_1} + \frac{1}{k_2}\Theta_{t_2} - 1\right) \,  \mathcal{F}^{(1)}_1 (b_1 - 1)\nonumber\\
	& \quad -  (c - 1)  \, (b_1 - 1) \mathcal{F}^{(1)}_1 (c - 1) = 0;\\
	& 	c \, \left(b_1 + \frac{1}{k_1} \Theta_{t_1} - 1\right) \, \mathcal{F}^{(1)}_1 (b_1 - 1) - (b_1 - 1)  \left(c + \frac{1}{k_1} \Theta_{t_1} + \frac{1}{k_2}\Theta_{t_2}\right)  \,  \mathcal{F}^{(1)}_1 (c + 1) = 0;\\
	& \left(b_2 + \frac{1}{k_2} \Theta_{t_2} - 1\right) \,	\left(c + \frac{1}{k_1} \Theta_{t_1} + \frac{1}{k_2}\Theta_{t_2} - 1\right) \,  \mathcal{F}^{(1)}_1 (b_2 - 1)\nonumber\\
	& \quad -  (c - 1)  \, (b_2 - 1) \mathcal{F}^{(1)}_1 (c - 1) = 0;\\
	& 	c \, \left(b_2 + \frac{1}{k_2}  \Theta_{t_2} - 1\right) \, \mathcal{F}^{(1)}_1 (b_2 - 1) - (b_2 - 1)  \left(c + \frac{1}{k_1} \Theta_{t_1} + \frac{1}{k_2}\Theta_{t_2}\right)  \,  \mathcal{F}^{(1)}_1 (c + 1) = 0;\\
	& 	c \, (c - 1) \, \mathcal{F}^{(1)}_1 (c - 1) - \left(c + \frac{1}{k_1} \Theta_{t_1} + \frac{1}{k_2}\Theta_{t_2} - 1\right)  \left(c + \frac{1}{k_1} \Theta_{t_1} + \frac{1}{k_2}\Theta_{t_2}\right)  \,  \mathcal{F}^{(1)}_1 (c + 1) = 0.
\end{align}

\section{The discrete Appell function $\mathcal{F}^{(2)}_1$}
In this section, we list the definitions, theorems and identities for the second discrete 
Appell function $\mathcal{F}^{(2)}_1$. The proofs for these are similar to the one given for discrete Appell function $\mathcal{F}^{(1)}_1$ and so are omitted. We start with the particular cases. 

For $k = 0$, the discrete Appell function $\mathcal{F}^{(2)}_1$ defined in \eqref{3.2} reduces to Appell function $F_1$ and for $k = 1$, $\mathcal{F}^{(2)}_1$ coincides with the generalized Kamp\'e de F\'eriet function as:
\begin{align}
& \mathcal{F}^{(2)}_1(a, b_1, b_2; c; t, 1, x, y)
	 = F_{{1}:{0};{0}} ^{{2}:{1};{1}}\left(\begin{array}{ccc}
		a, -t: & b_1; & b_2\\
		c: & - ; & - 
	\end{array}; -x, \ -y\right).
\end{align}
The limiting cases of discrete Appell function $\mathcal{F}^{(2)}_1$ produce the second discrete form of the  Humbert functions $\phi_1$, $\phi_2$ and $\phi_3$. We denote these functions by $\phi^{(2)}_1$, $\phi^{(2)}_2$, $\phi^{(2)}_3$, defined as: 
\begin{align}
	\phi^{(2)}_1 \left(a, b_1; c; t, k, x,  y\right) &= \sum_{m,n \geq 0} \frac{(a)_{m+n} \, (b_1)_m \,  \, (-1)^{(m + n) k} \, (-t)_{(m + n)k}}{ (c)_{m+n} \, m! \, n!} \ x^m \, y^n;
	\\[5pt]
	\phi^{(2)}_2 \left( b_1, b_2; c; t, k, x,  y\right) & = \sum_{m,n \geq 0} \frac{(b_1)_{m} \, (b_2)_n \,  \, (-1)^{(m + n) k} \, (-t)_{(m + n)k}}{ (c)_{m+n} \, m! \, n!} \ x^m \, y^n
	\\[5pt]
	\phi^{(2)}_3 \left( b_1; c; t, k, x,  y\right) & = 	\sum_{m,n \geq 0} \frac{(b_1)_{m} \,  \, (-1)^{(m + n) k} \, (-t)_{(m + n)k}}{ (c)_{m+n} \, m! \, n!} \ x^m \, y^n.
\end{align}
The following degeneration relations between  discrete Appell function $\mathcal{F}^{(2)}_1$ and discrete Humbert functions $\phi^{(2)}_1$, $\phi^{(2)}_2$ and $\phi^{(2)}_3$ can be verified easily.
\begin{align}
	& \lim_{\varepsilon \to 0}  \mathcal{F}^{(2)}_1 \left(a, b_1, \frac{1}{\varepsilon}; c; t, k, x, \varepsilon \, y\right)  = \phi^{(2)}_1 \left(a, b_1; c; t, k, x,  y\right);\nonumber
	\\[5pt]
	&\lim_{\varepsilon \to 0}  \mathcal{F}^{(2)}_1 \left(\frac{1}{\varepsilon}, b_1, b_2 ; c; t, k, \varepsilon \, x, \varepsilon \, y\right) = \phi^{(2)}_2 \left( b_1, b_2; c; t, k, x,  y\right);\nonumber
	\\[5pt]
	&\lim_{\varepsilon \to 0}  \mathcal{F}^{(2)}_1 \left(\frac{1}{\varepsilon}, b_1,\frac{1}{\varepsilon} ; c; t, k, \varepsilon \, x, \varepsilon^2 \, y\right) = \phi^{(2)}_3 \left( b_1; c; t, k, x,  y\right).
\end{align}
The difference-differential equations satisfied by discrete function $\mathcal{F}^{(2)}_1$ are
\begin{align}
	&\left[\theta \left(\frac{1}{k} \Theta_{t} + c - 1\right) - k \, (-1)^k \, (-t)_k \, x \, \rho_t^k \,  \left(\frac{1}{k} \Theta_{t}  + a\right) \, \left(b_1 + \theta\right) \right] \mathcal{F}^{(2)}_1 = 0;\label{e7.6}\\
	&\left[\phi \left(\frac{1}{k} \Theta_{t} + c - 1\right) - k \, (-1)^k \, (-t)_k \, y \, \rho_t^k \,  \left(\frac{1}{k} \Theta_{t}  + a\right) \, \left(b_2 + \phi\right) \right] \mathcal{F}^{(2)}_1 = 0.\label{e7.7}
\end{align}	    
This implies
\begin{align}
	\left[ y \, \theta (b_2 + \phi) -  x \, \phi \left(b_1 + \theta  \right)\right] \,  \mathcal{F}^{(2)}_1 = 0.\label{e7.8} 
\end{align}
Observe that the difference-differential equations \eqref{e7.6} and \eqref{e7.7} satisfied by the discrete Appell  function $\mathcal{F}^{(2)}_1$  lead to a differential equation \eqref{e7.8} obeyed by $\mathcal{F}^{(2)}_1$. 
\subsection{Integral representations}
The integral representations of the discrete Appell  function $\mathcal{F}^{(2)}_1$ are given by:
	\begin{align}
		& \mathcal{F}^{(2)}_1(a, b_1, b_2; c; t, k, x, y)\nonumber\\
		& =\Gamma \left(\begin{array}{c}
			c\\
			a, c - a
		\end{array}\right) \int_{0}^{1} u^{a - 1} (1 - u)^{c - a - 1}\nonumber\\
	& \quad \times  F_{{0}:{0}; {0}} ^{{k}:{1}; {1}}\left(\begin{array}{ccc}
		\frac{-t}{k}, \dots, \frac{- t + k - 1}{k}: & b_1; & b_2\\
		-: & - ; & - 
	\end{array}; (-k)^k \, u x,  (-k)^k \, u y\right) du;\\
		& =\Gamma \left(\begin{array}{c}
			c\\
			b_1, b_2, c - b_1 - b_2
		\end{array}\right) \iint u^{b_1 - 1} v^{b_2 - 1} (1-u-v)^{c -  b_1 - b_2 - 1} \nonumber\\
		& \quad \times F_{{0}:{0}; {0}} ^{{k+1}:{0}; {0}}\left(\begin{array}{ccc}
			a, \frac{- t}{k}, \dots, \frac{- t + k - 1}{k}: & -; & -\\
			-: & - ; & - 
		\end{array}; (-k)^k \, u x,  (-k)^k \, v y\right) du\,dv,\nonumber\\
		&\qquad \qquad \qquad \qquad
		u\geq 0, \ v\geq 0, \ 1-u-v\geq 0;\\
& = \frac{1}{\Gamma (a)} \int_{0}^{\infty} e^{-u} \, u^{a - 1} \nonumber\\
	& \quad \times F_{{1}:{0}; {0}} ^{{k}:{1}; {1}}\left(\begin{array}{ccc}
		\frac{- t}{k}, \dots, \frac{- t + k - 1}{k} : & b_1 ; & b_2\\
		c: & - ; & - 
	\end{array}; (-k)^k \, u x,  (-k)^k \, u y\right) du;\\
	& = \frac{1}{\Gamma (b_1)} \int_{0}^{\infty} e^{-u} \, u^{b_1 - 1} \nonumber\\
	& \quad \times F_{{1}:{0}; {0}} ^{{k + 1}:{0}; {1}}\left(\begin{array}{ccc}
		a, \frac{- t}{k}, \dots, \frac{- t + k - 1}{k}  : & -; & b_2\\
		c: & - ; & - 
	\end{array}; (-k)^k \, u x,  (-k)^k \, y\right) du;\\
	& = \frac{1}{\Gamma (b_2)} \int_{0}^{\infty} e^{-v} \, v^{b_2 - 1} \nonumber\\
	& \quad \times F_{{1}:{0}; {0}} ^{{k + 1}:{1}; {0}}\left(\begin{array}{ccc}
		a, \frac{- t}{k}, \dots, \frac{- t + k - 1}{k}: & b_1; & -\\
		c: & - ; & - 
	\end{array}; (-k)^k \, x,  (-k)^k \, v y\right) dv\\
& = \frac{1}{\Gamma (-t)} \int_{0}^{\infty} e^{-u} \, u^{- t - 1}  F_{{1}:{0}; {0}} ^{{1}:{1}; {1}}\left(\begin{array}{ccc}
a : & b_1 ; & b_2\\
c: & - ; & - 
\end{array}; (-u)^k \,  x,  (-u)^k \,  y\right) du.
\end{align}
\subsection{Differential formulae}
The following  differential formulae are satisfied by discrete Appell function $\mathcal{F}^{(2)}_1$:
\begin{align}
	& (\theta)^r \mathcal{F}^{(2)}_1(a, b_1, b_2; c; t, k, x, y) \nonumber\\
	& = \frac{(-1)^{rk} \, (a)_r \, (b_1)_r \, (-t)_{rk} \, x^r}{(c)_r} \, \mathcal{F}^{(2)}_1(a + r, b_1 + r, b_2; c + r; t - rk, k, x, y);\\
	& (\phi)^r \mathcal{F}^{(2)}_1(a, b_1, b_2; c; t, k, x, y) \nonumber\\
	& = \frac{(-1)^{rk} \, (a)_r \, (b_2)_r \, (-t)_{rk} \, y^r}{(c)_r} \, \mathcal{F}^{(2)}_1(a + r, b_1, b_2 + r; c + r; t - rk, k, x, y);\\
	& \left(\frac{\partial}{\partial x}\right)^r \left[x^{b_1 + r - 1} \mathcal{F}^{(2)}_1(a, b_1, b_2; c; t, k, x, y)\right]\nonumber\\
	& = x^{b_1 - 1} \, (b_1)_r \, \mathcal{F}^{(2)}_1(a, b_1 + r, b_2; c; t, k, x, y);\\
	& \left(\frac{\partial}{\partial y}\right)^r [y^{b_2 + r - 1} \mathcal{F}^{(2)}_1(a, b_1, b_2; c; t, k, x, y)]\nonumber\\
	& = y^{b_2 - 1} \, (b_2)_r \, \mathcal{F}^{(2)}_1(a, b_1, b_2 + r; c; t, k, x, y);\\
	& \left(\frac{\partial}{\partial x}\right)^r [x^{a + r - 1} \mathcal{F}^{(2)}_1(a, b_1, b_2; c; t, k, x, xy)]\nonumber\\
	& = x^{a - 1} \, (a)_r \, \mathcal{F}^{(2)}_1(a + r, b_1, b_2; c; t, k, x, xy);\\
	& \left(\frac{\partial}{\partial y}\right)^r [y^{a + r - 1} \mathcal{F}^{(2)}_1(a, b_1, b_2; c; t, k, xy, y)]\nonumber\\
	& = y^{a - 1} \, (a)_r \, \mathcal{F}^{(2)}_1(a + r, b_1, b_2; c; t, k, xy, y);\\
	& \left(\frac{\partial}{\partial x}\right)^r [x^{c - 1} \mathcal{F}^{(2)}_1(a, b_1, b_2; c; t, k, x, xy)]\nonumber\\
	& = (-1)^r \, (1 - c)_r \, x^{c - r - 1} \mathcal{F}^{(2)}_1(a, b_1, b_2; c - r; t, k, x, xy);\\
	& \left(\frac{\partial}{\partial y}\right)^r [y^{c - 1} \mathcal{F}^{(2)}_1(a, b_1, b_2; c; t, k, xy, y)]\nonumber\\
	& = (-1)^r \, y^{c - r - 1} \, (1 - c)_r \, \mathcal{F}^{(2)}_1(a, b_1, b_2; c - r; t, k, xy, y).
\end{align}
\subsection{Finite and infinite summation formulae}
Some finite and infinite summation formulas for the discrete Appell function $\mathcal{F}^{(2)}_1$ can be obtained as
\begin{theorem} Following summation formulas for the discrete Appell function $\mathcal{F}^{(2)}_1$ hold:
	\begin{align}
		& \mathcal{F}^{(2)}_1(a, b_1 + r, b_2; c; t, k, x, y)\nonumber\\
		& = \sum_{s = 0}^{r} {r \choose s} \frac{(a)_s \, (-1)^{sk} \, (-t)_{sk}}{(c)_s} \, x^s \, \mathcal{F}^{(2)}_1(a + s, b_1 + s, b_2; c + s; t - sk,  k, x, y);\\
		& \mathcal{F}^{(2)}_1(a, b_1, b_2 + r; c; t, k, x, y)\nonumber\\
		& = \sum_{s = 0}^{r} {r \choose s} \frac{(a)_s \, (-1)^{sk} \, (-t)_{sk}}{(c)_s} \, y^s \, \mathcal{F}^{(2)}_1(a + s, b_1, b_2 + s; c + s; t - sk, k, x, y); \\
			&\sum_{r = 0}^{\infty} \frac{(a)_r}{r !} \, z^r \, \mathcal{F}^{(2)}_1 (a + r, b_1, b_2; c ; t, k, x, y) = (1 - z)^{-a} \, \mathcal{F}^{(2)}_1 \left(a, b_1, b_2; c; t, k, \frac{x}{1 - z}, \frac{y}{1 - z}\right); \\
			&\sum_{r = 0}^{\infty} \frac{(b_1)_r}{r !} \, z^r \, \mathcal{F}^{(2)}_1 (a, b_1 + r, b_2; c ; t, k, x, y) = (1 - z)^{-b_1} \, \mathcal{F}^{(2)}_1 \left(a, b_1, b_2; c; t, k, \frac{x}{1 - z}, y\right); \\
			&\sum_{r = 0}^{\infty} \frac{(b_2)_r}{r !} \, z^r \, \mathcal{F}^{(2)}_1 (a, b_1, b_2 + r; c ; t, k, x, y) = (1 - z)^{-b_2} \, \mathcal{F}^{(2)}_1 \left(a, b_1, b_2; c; t, k, x, \frac{y}{1 - z}\right). 
	\end{align}
\end{theorem}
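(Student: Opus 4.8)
The plan is to mirror the proofs already carried out for $\mathcal{F}^{(1)}_1$ in Section~5, now invoking the differential formulae listed for $\mathcal{F}^{(2)}_1$ in the preceding subsection. The five identities split into two groups: the two finite (binomial) sums, which rest on the Leibniz rule, and the three infinite sums, which rest on the binomial theorem together with a Pochhammer splitting. Throughout I would assume $|x|,|y|<1$ (and $|z|$ small in the infinite sums), so that the convergence estimate of Section~3 guarantees absolute convergence and hence the freedom to rearrange and interchange the order of summation.

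For the first finite formula I would start from the differential formula $(\partial/\partial x)^r[x^{b_1+r-1}\mathcal{F}^{(2)}_1(a,b_1,b_2;c;t,k,x,y)] = x^{b_1-1}(b_1)_r\,\mathcal{F}^{(2)}_1(a,b_1+r,b_2;c;t,k,x,y)$ and then expand the \emph{same} left-hand side by the Leibniz rule for the $r$-th derivative of a product. The two factors are disposed of by $(\partial/\partial x)^{r-s}x^{b_1+r-1}=(b_1+s)_{r-s}\,x^{b_1+s-1}$ and by the $\theta$-type formula $(\partial/\partial x)^s\mathcal{F}^{(2)}_1=\frac{(a)_s(b_1)_s(-1)^{sk}(-t)_{sk}}{(c)_s}\mathcal{F}^{(2)}_1(a+s,b_1+s,b_2;c+s;t-sk,k,x,y)$. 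Substituting $(b_1+s)_{r-s}=(b_1)_r/(b_1)_s$ cancels the factor $(b_1)_s$, and after dividing through by the common $x^{b_1-1}(b_1)_r$ the claimed sum drops out, the surviving power $x^{b_1+s-1}/x^{b_1-1}=x^s$ supplying the $x^s$ on the right. The $b_2$-shifted formula follows verbatim with $x,\partial/\partial x,b_1$ replaced by $y,\partial/\partial y,b_2$.

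For the three infinite sums I would begin from the right-hand side and expand $\mathcal{F}^{(2)}_1$ as its defining double series. In the $a$-shift identity the prefactor $(1-z)^{-a}$ together with the rescaled arguments $x/(1-z),\,y/(1-z)$ yields the single power $(1-z)^{-(a+m+n)}$ in the $(m,n)$ term; expanding it by $(1-z)^{-(a+m+n)}=\sum_{r\ge0}\frac{(a+m+n)_r}{r!}z^r$ and using $(a)_{m+n}(a+m+n)_r=(a)_{m+n+r}=(a)_r(a+r)_{m+n}$ lets me extract $\frac{(a)_r}{r!}z^r$ and recognize the residual double series as $\mathcal{F}^{(2)}_1(a+r,b_1,b_2;c;t,k,x,y)$. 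The $b_1$- and $b_2$-shift identities are identical except that only the single power $(1-z)^{-(b_1+m)}$ (respectively $(1-z)^{-(b_2+n)}$) is expanded, using $(b_1)_m(b_1+m)_r=(b_1)_r(b_1+r)_m$.

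The only step requiring genuine care—and the sole point of departure from the $\mathcal{F}^{(1)}_1$ computation—is the bookkeeping for the coupled factor $(-t)_{(m+n)k}$, which depends on $m+n$ jointly rather than factoring into separate $m$- and $n$-parts. In the finite sums this forces the use of $(-t)_{(m+n+s)k}=(-t)_{sk}\,(-t+sk)_{(m+n)k}$, an instance of $(u)_{p+q}=(u)_p(u+p)_q$ with $p=sk$, and it is precisely this split that produces both the shift $t\mapsto t-sk$ and the factor $(-t)_{sk}$ on the right. In the three infinite sums, by contrast, this factor is left entirely untouched, since neither the binomial index $r$ nor the shift in $a$, $b_1$, or $b_2$ interacts with $m+n$ inside $(-t)_{(m+n)k}$; once this is observed the three reduce to the classical Appell-$F_1$ manipulations, and the interchange of summation validated by absolute convergence completes each proof.
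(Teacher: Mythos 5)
Your proposal is correct and follows essentially the same route as the paper: the paper omits these proofs as ``similar to'' those for $\mathcal{F}^{(1)}_1$, whose Section~5 argument is precisely your combination of the Leibniz rule with the differential formulae for the two finite sums, and the binomial expansion of $(1-z)^{-(a+m+n)}$ (resp. $(1-z)^{-(b_1+m)}$, $(1-z)^{-(b_2+n)}$) with the splitting $(u)_{p+q}=(u)_p(u+p)_q$ for the three infinite sums. Your explicit handling of the coupled factor $(-t)_{(m+n)k}$ via $(-t)_{(m+n+s)k}=(-t)_{sk}(-t+sk)_{(m+n)k}$ is exactly the adaptation the paper's omitted proof requires, so there is no gap.
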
 
The following recursion formulas hold for the discrete Appell function $ \mathcal{F}^{(2)}_1$. 
\begin{align}
	& \mathcal{F}^{(2)}_1 (a + s, b_1, b_2; c ; t, k, x, y) \nonumber\\
	& = \mathcal{F}^{(2)}_1 (a, b_1, b_2; c ; t, k, x, y)\nonumber\\
	& \quad + \frac{(-1)^k \, (-t)_k \, b_1 \, x}{c} \sum_{r = 1}^{s} \mathcal{F}^{(2)}_1 (a + r, b_1 + 1, b_2; c + 1; t - k, k, x, y) \nonumber\\
	& \quad + \frac{(-1)^k \, (-t)_k \, b_2 \, y}{c}  \sum_{r = 1}^{s} \mathcal{F}^{(2)}_1 (a + r, b_1, b_2 + 1; c + 1; t - k, k, x, y);\\
	& \mathcal{F}^{(2)}_1 (a - s, b_1, b_2; c ; t, k, x, y) \nonumber\\
	& = \mathcal{F}^{(2)}_1 (a, b_1, b_2; c ; t, k, x, y) \nonumber\\
	& \quad - \frac{(-1)^k \, (-t)_k \, b_1 \, x}{c}  \sum_{r = 0}^{s - 1} \mathcal{F}^{(2)}_1 (a - r, b_1 + 1, b_2; c + 1; t - k, k, x, y) \nonumber\\
	& \quad - \frac{(-1)^k \, (-t)_k \, b_2 \, y}{c}  \sum_{r = 0}^{s - 1} \mathcal{F}^{(2)}_1 (a - r, b_1, b_2 + 1; c + 1; t - k, k, x, y);\\
	& \mathcal{F}^{(2)}_1 (a, b_1 + s, b_2; c ; t, k, x, y) \nonumber\\
	& = \mathcal{F}^{(2)}_1 (a, b_1, b_2; c ; t, k, x, y) \nonumber\\
	& \quad + \frac{(-1)^k \, (-t)_k \, a \, x}{c} \sum_{r = 1}^{s} \mathcal{F}^{(2)}_1 (a + 1, b_1 + r, b_2; c + 1; t - k,  k, x, y);\\
	& \mathcal{F}^{(2)}_1 (a, b_1 - s, b_2; c ; t, k, x, y) \nonumber\\
	& = \mathcal{F}^{(2)}_1 (a, b_1, b_2; c ; t, k, x, y) \nonumber\\
	& \quad - \frac{(-1)^k \, (-t)_k \, a \, x}{c}  \sum_{r = 0}^{s - 1} \mathcal{F}^{(2)}_1 (a + 1, b_1 - r, b_2; c + 1; t - k, k, x, y);\\
	& \mathcal{F}^{(2)}_1 (a, b_1, b_2; c - s; t, k, x, y) \nonumber\\
	& = \mathcal{F}^{(2)}_1 (a, b_1, b_2; c ; t, k, x, y) \nonumber\\
	& \quad + (-1)^k \, (-t)_k \, a \, b_1 \, x \sum_{r = 1}^{s} \frac{\mathcal{F}^{(2)}_1 (a + 1, b_1 + 1, b_2; c + 2 - r; t - k, k, x, y)}{(c - r) \, (c - r + 1)} \nonumber\\
	& \quad + (-1)^k \, (-t)_k \, a \, b_2 \, y  \sum_{r = 1}^{s} \frac{\mathcal{F}^{(2)}_1 (a + 1, b_1, b_2 + 1; c + 2 - r; t - k, k, x, y)}{(c - r) \, (c - r + 1)}.
\end{align}
Finally, we list differential recursion formulae satisfied by $\mathcal{F}^{(2)}_1$. Using the list of simple differential relations as follows: 
\begin{align}
	&	a \, \mathcal{F}^{(2)}_1 (a + 1)  = (a + \theta + \phi) \, \mathcal{F}^{(2)}_1;\\
	& (a + \theta + \phi - 1) \, \mathcal{F}^{(2)}_1 (a - 1)  =	(a - 1) \, \mathcal{F}^{(2)}_1;\\
	&	b_1 \, \mathcal{F}^{(2)}_1 (b_1 + 1)  = (b_1 + \theta) \, \mathcal{F}^{(2)}_1;\\
	&(b_1 + \theta - 1) \, \mathcal{F}^{(2)}_1 (b_1 - 1)  =	(b_1 - 1) \, \mathcal{F}^{(2)}_1;\\
	&	b_2 \, \mathcal{F}^{(2)}_1 (b_2 + 1)  = (b_2 + \phi) \, \mathcal{F}^{(2)}_1;\\
	& (b_2  + \phi - 1) \, \mathcal{F}^{(2)}_1 (b_2 - 1)  =	(b_2 - 1) \, \mathcal{F}^{(2)}_1;\\
	&	(c - 1) \, \mathcal{F}^{(2)}_1 (c - 1)  = (c + \theta + \phi - 1) \, \mathcal{F}^{(2)}_1;\\
	& (c + \theta + \phi) \, \mathcal{F}^{(2)}_1 (c + 1)  =	c \, \mathcal{F}^{(2)}_1.
\end{align}
we get first order or second order differential recursion relations as
\begin{align}
	& a \, (a - 1) \, \mathcal{F}^{(2)}_1 (a + 1) - (a + \theta + \phi) \, (a + \theta + \phi - 1) \mathcal{F}^{(2)}_1 (a - 1) = 0;\\
	& a \, (b_1 - 1) \, \mathcal{F}^{(2)}_1 (a + 1) - (a + \theta + \phi) \, (b_1 + \theta  - 1) \mathcal{F}^{(2)}_1 (b_1 - 1) = 0;\\
	& a \, (b_2 - 1) \, \mathcal{F}^{(2)}_1 (a + 1) - (a + \theta + \phi) \, (b_2 + \phi - 1) \mathcal{F}^{(2)}_1 (b_2 - 1) = 0;\\
	& a \, c \, \mathcal{F}^{(2)}_1 (a + 1) - (a + \theta + \phi) \, (c + \theta + \phi) \mathcal{F}^{(2)}_1 (c + 1) = 0;\\
	& a \, (b_1 + \theta) \, \mathcal{F}^{(2)}_1 (a + 1) - b_1 \, (a + \theta + \phi) \, \mathcal{F}^{(2)}_1 (b_1 + 1) = 0;\\
	& a \, (b_2 + \phi) \, \mathcal{F}^{(2)}_1 (a + 1) - b_2 \, (a + \theta + \phi) \, \mathcal{F}^{(2)}_1 (b_2 + 1) = 0;\\
	& a \, (c + \theta + \phi - 1) \, \mathcal{F}^{(2)}_1 (a + 1) - (c - 1) \, (a + \theta + \phi) \, \mathcal{F}^{(2)}_1 (c - 1) = 0;\\
	&(a + \theta + \phi - 1) \, (b_1 + \theta) \, \mathcal{F}^{(2)}_1 (a - 1) - b_1 \, (a - 1)  \, \mathcal{F}^{(2)}_1 (b_1 + 1) = 0;\\
	&	(a + \theta + \phi - 1) \, (b_2 + \phi) \, \mathcal{F}^{(2)}_1 (a - 1) - b_2 \, (a - 1)  \, \mathcal{F}^{(2)}_1 (b_2 + 1) = 0;\\
	&	(a + \theta + \phi - 1) \, (c + \theta + \phi - 1) \, \mathcal{F}^{(2)}_1 (a - 1) - (c - 1) \, (a - 1)  \, \mathcal{F}^{(2)}_1 (c - 1) = 0;\\
	& (b_1 - 1) \,	(a + \theta + \phi - 1) \,  \mathcal{F}^{(2)}_1 (a - 1) -  (a - 1)  \, (b_1 + \theta - 1) \mathcal{F}^{(2)}_1 (b_1 - 1) = 0;\\
	& (b_2 - 1) \,	(a + \theta + \phi - 1) \,  \mathcal{F}^{(2)}_1 (a - 1) -  (a - 1)  \, (b_2 + \phi - 1) \mathcal{F}^{(2)}_1 (b_2 - 1) = 0;\\
	& c \,	(a + \theta + \phi - 1) \,  \mathcal{F}^{(2)}_1 (a - 1) -  (a - 1)  \, (c + \theta + \phi) \mathcal{F}^{(2)}_1 (c + 1) = 0;\\
	& b_1 \,	(b_1 - 1) \,  \mathcal{F}^{(2)}_1 (b_1 + 1) -  (b_1 + \theta)  \, (b_1 + \theta - 1) \mathcal{F}^{(2)}_1 (b_1 - 1) = 0;\\
	& b_1 \,	(b_2 + \phi) \,  \mathcal{F}^{(2)}_1 (b_1 + 1) -  b_2  \, (b_1 + \theta ) \mathcal{F}^{(2)}_1 (b_2 + 1) = 0;\\
	& b_1 \,	(b_2 - 1) \,  \mathcal{F}^{(2)}_1 (b_1 + 1) -  (b_1 + \theta)  \, (b_2 + \phi - 1) \mathcal{F}^{(2)}_1 (b_2 - 1) = 0;\\
	& b_1 \,	(c + \theta + \phi - 1) \,  \mathcal{F}^{(2)}_1 (b_1 + 1) -  (c - 1)  \, (b_1 + \theta) \mathcal{F}^{(2)}_1 (c - 1) = 0;\\
	& b_1 \,	c \,  \mathcal{F}^{(2)}_1 (b_1 + 1) -  (c + \theta +\phi)  \, (b_1 + \theta) \mathcal{F}^{(2)}_1 (c + 1) = 0;\\
	& b_2 \,	(b_1 - 1) \,  \mathcal{F}^{(2)}_1 (b_2 + 1) -  (b_2 + \phi)  \, (b_1 + \theta - 1) \mathcal{F}^{(2)}_1 (b_1 - 1) = 0;\\
	& b_2 \,	(b_2 - 1) \,  \mathcal{F}^{(2)}_1 (b_2 + 1) -  (b_2 + \phi)  \, (b_2 + \phi - 1) \mathcal{F}^{(2)}_1 (b_2 - 1) = 0;\\
	& b_2 \,	(c + \theta + \phi - 1) \,  \mathcal{F}^{(2)}_1 (b_2 + 1) -  (c - 1)  \, (b_2 + \phi) \mathcal{F}^{(2)}_1 (c - 1) = 0;\\
	& b_2 \,	c \,  \mathcal{F}^{(2)}_1 (b_2 + 1) -  (c + \theta +\phi)  \, (b_2 + \phi) \mathcal{F}^{(2)}_1 (c + 1) = 0;\\
	& (b_2 - 1) \,	(b_1 + \theta - 1) \,  \mathcal{F}^{(2)}_1 (b_1 - 1) -  (b_1 - 1)  \, (b_2 + \phi - 1) \mathcal{F}^{(2)}_1 (b_2 - 1) = 0;\\
	& (b_1 + \theta - 1) \,	(c + \theta + \phi - 1) \,  \mathcal{F}^{(2)}_1 (b_1 - 1) -  (c - 1)  \, (b_1 - 1) \mathcal{F}^{(2)}_1 (c - 1) = 0;\\
	& 	c \, (b_1 + \theta - 1) \, \mathcal{F}^{(2)}_1 (b_1 - 1) - (b_1 - 1)  (c + \theta +\phi)  \,  \mathcal{F}^{(2)}_1 (c + 1) = 0;\\
	& (b_2 + \phi - 1) \,	(c + \theta + \phi - 1) \,  \mathcal{F}^{(2)}_1 (b_2 - 1) -  (c - 1)  \, (b_2 - 1) \mathcal{F}^{(2)}_1 (c - 1) = 0;\\
	& 	c \, (b_2 + \phi - 1) \, \mathcal{F}^{(2)}_1 (b_2 - 1) - (b_2 - 1)  (c + \theta +\phi)  \,  \mathcal{F}^{(2)}_1 (c + 1) = 0;\\
	& 	c \, (c - 1) \, \mathcal{F}^{(2)}_1 (c - 1) - (c + \theta + \phi - 1)  (c + \theta +\phi)  \,  \mathcal{F}^{(2)}_1 (c + 1) = 0.
\end{align}
Similarly using  the difference-differential relations: 
\begin{align}
	&	a \, \mathcal{F}^{(2)}_1 (a + 1)  = \left(a + \frac{1}{k} \Theta_{t} \right) \, \mathcal{F}^{(2)}_1;\\
	& \left(a + \frac{1}{k} \Theta_{t} - 1\right) \, \mathcal{F}^{(2)}_1 (a - 1)  =	(a - 1) \, \mathcal{F}^{(2)}_1;\\
	&	b_1 \, \mathcal{F}^{(2)}_1 (b_1 + 1)  = \left(b_1 + \theta \right) \, \mathcal{F}^{(2)}_1;\\
	&\left(b_1 + \theta - 1\right) \, \mathcal{F}^{(2)}_1 (b_1 - 1)  =	(b_1 - 1) \, \mathcal{F}^{(2)}_1;\\
	&	b_2 \, \mathcal{F}^{(2)}_1 (b_2 + 1)  = \left(b_2 + \phi \right) \, \mathcal{F}^{(2)}_1;\\
	& \left(b_2 + \phi - 1\right) \, \mathcal{F}^{(2)}_1 (b_2 - 1)  =	(b_2 - 1) \, \mathcal{F}^{(2)}_1;\\
	&	(c - 1) \, \mathcal{F}^{(2)}_1 (c - 1)  = \left(c + \frac{1}{k} \Theta_{t} - 1\right) \, \mathcal{F}^{(2)}_1;\\
	& \left(c + \frac{1}{k} \Theta_{t}\right) \, \mathcal{F}^{(2)}_1 (c + 1)  =	c \, \mathcal{F}^{(2)}_1,
\end{align}
we get the difference-differential recursion relations as: 
\begin{align}
	& a \, (a - 1) \, \mathcal{F}^{(2)}_1 (a + 1) - \left(a + \frac{1}{k} \Theta_{t}\right) \, \left(a + \frac{1}{k} \Theta_{t} - 1\right) \mathcal{F}^{(2)}_1 (a - 1) = 0;\\
	& a \, (b_1 - 1) \, \mathcal{F}^{(2)}_1 (a + 1)  - \left(a + \frac{1}{k} \Theta_{t} \right) \, \left(b_1 + \theta  - 1\right) \mathcal{F}^{(2)}_1 (b_1 - 1) = 0;\\
	& a \, (b_2 - 1) \, \mathcal{F}^{(2)}_1 (a + 1) - \left(a + \frac{1}{k} \Theta_{t}\right) \, \left(b_2 + \phi - 1\right) \mathcal{F}^{(2)}_1 (b_2 - 1) = 0;\\
	& a \, c \, \mathcal{F}^{(2)}_1 (a + 1) - \left(a + \frac{1}{k} \Theta_{t}\right) \, \left(c + \frac{1}{k} \Theta_{t} \right) \mathcal{F}^{(2)}_1 (c + 1) = 0;\\
	& a \, \left(b_1 + \theta\right) \, \mathcal{F}^{(2)}_1 (a + 1) - b_1 \, \left(a + \frac{1}{k} \Theta_{t}\right) \, \mathcal{F}^{(2)}_1 (b_1 + 1) = 0;\\
	& a \, \left(b_2 + \phi\right) \, \mathcal{F}^{(2)}_1 (a + 1) - b_2 \, \left(a + \frac{1}{k} \Theta_{t}\right) \, \mathcal{F}^{(2)}_1 (b_2 + 1) = 0;\\
	& a \, \left(c + \frac{1}{k} \Theta_{t} - 1\right) \, \mathcal{F}^{(2)}_1 (a + 1) - (c - 1) \, \left(a + \frac{1}{k} \Theta_{t}\right) \, \mathcal{F}^{(2)}_1 (c - 1) = 0;\\
	&\left(a + \frac{1}{k} \Theta_{t} - 1\right) \, \left(b_1 + \theta \right) \, \mathcal{F}^{(2)}_1 (a - 1) - b_1 \, (a - 1)  \, \mathcal{F}^{(2)}_1 (b_1 + 1) = 0;\\
	&	\left(a + \frac{1}{k} \Theta_{t} - 1\right) \, \left(b_2 + \phi\right) \, \mathcal{F}^{(2)}_1 (a - 1) - b_2 \, (a - 1)  \, \mathcal{F}^{(2)}_1 (b_2 + 1) = 0;\\
	&	\left(a + \frac{1}{k} \Theta_{t} - 1\right) \, \left(c + \frac{1}{k} \Theta_{t} - 1\right) \, \mathcal{F}^{(2)}_1 (a - 1) - (c - 1) \, (a - 1)  \, \mathcal{F}^{(2)}_1 (c - 1) = 0;\\
	& (b_1 - 1) \,	\left(a + \frac{1}{k} \Theta_{t} - 1\right) \,  \mathcal{F}^{(2)}_1 (a - 1)  -  (a - 1)  \, \left(b_1 + \theta  - 1\right) \mathcal{F}^{(2)}_1 (b_1 - 1) = 0;\\
	& (b_2 - 1) \,	\left(a + \frac{1}{k} \Theta_{t} - 1\right) \,  \mathcal{F}^{(2)}_1 (a - 1)  -  (a - 1)  \, \left(b_2 + \phi - 1\right) \mathcal{F}^{(2)}_1 (b_2 - 1) = 0;\\
	& c \,	\left(a + \frac{1}{k} \Theta_{t} - 1\right) \,  \mathcal{F}^{(2)}_1 (a - 1) -  (a - 1)  \, \left(c + \frac{1}{k} \Theta_{t}\right) \mathcal{F}^{(2)}_1 (c + 1) = 0;\\
	& b_1 \,	\left(c + \frac{1}{k} \Theta_{t} -  1\right) \,  \mathcal{F}^{(2)}_1 (b_1 + 1) -  (c - 1)  \, \left(b_1 + \theta \right) \mathcal{F}^{(2)}_1 (c - 1) = 0;\\
	& b_1 \,	c \,  \mathcal{F}^{(2)}_1 (b_1 + 1) -  \left(c + \frac{1}{k} \Theta_{t}\right)  \, \left(b_1 + \theta\right) \mathcal{F}^{(2)}_1 (c + 1) = 0;\\
	& b_2 \,	\left(c + \frac{1}{k} \Theta_{t} - 1\right) \,  \mathcal{F}^{(2)}_1 (b_2 + 1) -  (c - 1)  \, \left(b_2 + \phi\right) \mathcal{F}^{(2)}_1 (c - 1) = 0;\\
	& b_2 \,	c \,  \mathcal{F}^{(2)}_1 (b_2 + 1) -  \left(c + \frac{1}{k} \Theta_{t}\right)  \, \left(b_2 + \phi\right) \mathcal{F}^{(2)}_1 (c + 1) = 0;\\
	& \left(b_1 + \theta - 1\right) \,	\left(c + \frac{1}{k} \Theta_{t} - 1\right) \,  \mathcal{F}^{(2)}_1 (b_1 - 1) -  (c - 1)  \, (b_1 - 1) \mathcal{F}^{(2)}_1 (c - 1) = 0;\\
	& 	c \, \left(b_1 + \theta - 1\right) \, \mathcal{F}^{(2)}_1 (b_1 - 1) - (b_1 - 1)  \left(c + \frac{1}{k} \Theta_{t}\right)  \,  \mathcal{F}^{(2)}_1 (c + 1) = 0;\\
	& \left(b_2 + \phi - 1\right) \,	\left(c + \frac{1}{k} \Theta_{t} - 1\right) \,  \mathcal{F}^{(2)}_1 (b_2 - 1) -  (c - 1)  \, (b_2 - 1) \mathcal{F}^{(2)}_1 (c - 1) = 0;\\
	& 	c \, \left(b_2 + \phi - 1\right) \, \mathcal{F}^{(2)}_1 (b_2 - 1) - (b_2 - 1)  \left(c + \frac{1}{k} \Theta_{t}\right)  \,  \mathcal{F}^{(2)}_1 (c + 1) = 0;\\
	& 	c \, (c - 1) \, \mathcal{F}^{(2)}_1 (c - 1) - \left(c + \frac{1}{k} \Theta_{t} - 1\right)  \left(c + \frac{1}{k} \Theta_{t}\right)  \,  \mathcal{F}^{(2)}_1 (c + 1) = 0.
\end{align}

\section{Conclusion}  
We have discussed two distinct discrete forms of Appell function $F_1$ \emph{viz.} $\mathcal{F}^{(1)}_1$ and $\mathcal{F}^{(2)}_1$ in a detailed manner. Two particular cases are of special interest. These are the first discrete form $\mathcal{F}^{(1)}_1$ with 
$k_1 = k_2 = k$:
\begin{align}
 &\mathcal{F}^{(1)}_1(a, b_1, b_2; c; t_1, t_2, k, x, y)\nonumber\\ 
	& = \sum_{m,n\geq0} \frac{(a)_{m+n} \, (b_1)_m \, (b_2)_n \, (-1)^{(m + n) k} \,  (-t_1)_{mk} \, (-t_2)_{nk}}{ (c)_{m+n} \, m! \, n!} \ x^m \, y^n
\end{align} 
and with $t_1 = t_2 = t$:
\begin{align}
&\mathcal{F}^{(1)}_1(a, b_1, b_2; c; t, k_1, k_2, x, y)\nonumber\\ 
	& = \sum_{m,n \geq 0} \frac{(a)_{m+n} \, (b_1)_m \, (b_2)_n \, (-1)^{m k_1} \, (-1)^{n k_2} \, (-t)_{mk_1} \, (-t)_{nk_2}}{ (c)_{m+n} \, m! \, n!} \ x^m \, y^n.
\end{align}
These two functions can also be considered as the discrete analogues of Appell function $F_1$. Most of the results related to these discrete functions can be deduced from the discrete function $\mathcal{F}^{(1)}_1$. However, there is a scope for studying these functions separately. Furthermore, we have presented the discrete analogues of Humbert function $\phi_1$, $\phi_2$ and $\phi_3$ as the limiting cases of $\mathcal{F}^{(1)}_1$ and $\mathcal{F}^{(2)}_1$. So, the analytical properties of these discrete Humbert functions can be deduced by taking the limit in the respective properties of discrete Appell functions. Besides, study of the results and properties of these discrete Humbert function in comparative way will be an interesting problem. 

As discussed in the paper, the particular cases of the functions $\mathcal{F}^{(1)}_1$ and $\mathcal{F}^{(2)}_1$ reduce to the Appell function $F_1$ and the Kamp\'e de F\'eriet functions. Therefore the particular cases of results obtained for $\mathcal{F}^{(1)}_1$ and $\mathcal{F}^{(2)}_1$ will lead to the results for $F_1$ and  Kamp\'e de F\'eriet functions.  Most of the results obtained for $F_1$ in this way are already available in the literature but the differential recursion formulas are the new ones. Similarly, the results that can be determined for Kamp\'e de F\'eriet functions are also believed to be new. 

Appell has given four two variable functions $F_1$, $F_2$, $F_3$ and $F_4$. It was not possible to list all the findings for discrete analogues of these four functions in a single paper and hence we decided to collect the findings of each discrete Appell function separately. In this article, we have recorded the analysis of discrete analogues of Appell function $F_1$ and for discrete analogues of remaining three Appell functions, we have written articles separately.

\end{document}